\newcommand{\C}{\mathcal{C}}
\DeclareMathOperator{\Hom}{Hom}
\DeclareMathOperator{\RHom}{\mathbf{R}Hom}
\DeclareMathOperator{\Ext}{Ext}
\DeclareMathOperator{\Tor}{Tor}
\DeclareMathOperator{\Cone}{Cone}
\newcommand{\perf}{\mathrm{perf}}
\newcommand{\op}{\mathrm{o}}
\newcommand{\pd}{\mathrm{pd}}
\newcommand{\fd}{\mathrm{fd}}
\newcommand{\id}{\mathrm{inj.dim}}
\newcommand{\tens}[1]{%
  \mathbin{\mathop{\otimes}\displaylimits_{#1}}%
}
\newcommand{\ModA}{{\mathrm{Mod}(A)}}
\newcommand{\modA}{{\mathrm{mod}(A)}}
\newcommand{\modB}{{\mathrm{mod}(B)}}
\newcommand{\proj}[1]{\hbox{\rm proj}{#1}}
\newcommand{\stmod}[1]{\underline{\mathrm{mod}}\,{#1}}
\newcommand{\modd}[1]{\mathrm{mod}{#1}}
\newcommand{\rad}{\mathrm{rad}}
\newcommand{\Cpx}{\mathbf{C}}
\newcommand{\Htp}{\mathbf{K}}
\newcommand{\Der}{\mathbf{D}}
\newcommand{\Dsing}{\mathbf{D}_\mathrm{sg}}
\newtheorem{theorem}{Theorem}[section]
\newtheorem*{introthm}{Theorem}
\newtheorem*{introcor}{Corollary}
\newtheorem{lemma}[theorem]{Lemma}
\newtheorem{proposition}[theorem]{Proposition}
\newtheorem{corollary}[theorem]{Corollary}
\theoremstyle{definition}
\newtheorem{definition}[theorem]{Definition}
\newtheorem{fact}[theorem]{Fact}
\newtheorem{remark}[theorem]{Remark}
\newtheorem{example}[theorem]{Example}
\newtheorem{ipg}[theorem]{}
\begin{document}

\title{On singular equivalences of Morita type with level and Gorenstein algebras}

\subjclass[2010]{Primary: 16D90, 16E35; Secondary: 16E65, 16G50}
\keywords{Singularity category, Singular equivalence, Gorenstein ring.}

\author{Georgios Dalezios}
\address{Department of Mathematics, University of Athens, Athens 15784, Greece}
\email{gdalezios@math.uoa.gr}

\thanks{This research is co-financed by Greece and the European Union (European Social Fund- ESF) through the
Operational Programme «Human Resources Development, Education and Lifelong Learning» in the context
of the project “Reinforcement of Postdoctoral Researchers - 2nd Cycle” (MIS-5033021), implemented by the
State Scholarships Foundation (IKY)}

\begin{abstract}
Rickard proved that for certain self-injective algebras, a stable equivalence induced from an exact functor is a stable equivalence of Morita type, in the sense of Brou\'{e}. In this paper we study singular equivalences of finite dimensional algebras induced from tensor product functors. We prove that for certain Gorenstein algebras, a singular equivalence induced from tensoring with a suitable complex of bimodules, induces a singular equivalence of Morita type with level, in the sense of Wang. This recovers Rickard's theorem in the self-injective case.
\end{abstract}

\maketitle

\section{Introduction}
\label{sec:intro}
If $A$ is a finite dimensional algebra over a field, the study of its stable module category $\underline{\mathrm{mod}}(A)$, which is the additive quotient of the finitely generated $A$--modules modulo the projectives, has its origins in the (non-semisimple) representation theory of finite groups. In case $A$ is self-injective then $\underline{\mathrm{mod}}(A)$ is a triangulated category, see Happel~\cite[I.2]{Happel}, therefore techniques from the realm of triangulated categories can be used to study representations of finite groups and more generally self-injective algebras.

For a general left noetherian ring $A$ the category $\underline{\mathrm{mod}}(A)$ is not necessarily triangulated, but its \textit{singularity category} $\Dsing(A):=\Der^{b}(\mathrm{mod}(A))/\Htp^{b}(\proj(A))$ is. Note that this construction is analogous to that of $\underline{\mathrm{mod}}(A)$, namely we take the Verdier quotient of $\Der^{b}(\mathrm{mod}(A))$ modulo those complexes that have finite projective dimension, in a standard sense. Results of Buchweitz~\cite{Buch} and Rickard~\cite{JRc89} tell us that in case $A$ is self-injective, the canonical map $\underline{\mathrm{mod}}(A)\rightarrow \Dsing(A)$ is a triangulated equivalence. In fact, Buchweitz in \cite{Buch} proved more generally that in case $A$ is Gorenstein (i.e., two sided noetherian with finite injective dimension over itself on both sides), the canonical map $\underline{\mathrm{MCM}}(A)\rightarrow \Dsing(A)$ is a triangulated equivalence. Here $\underline{\mathrm{MCM}}(A)$ denotes the stable category of maximal Cohen-Macaulay $A$--modules.

In the spirit of Morita theory, it is an honest question to ask when two rings have equivalent stable module categories. However, an arbitrary equivalence of this kind does not preserve important properties of the rings in question. For example, if $k$ is a field, $A=k[x]/(x^{2})$ and $B$ is a triangular matrix algebra with entries in $k$, then $\underline{\mathrm{mod}}(A)\cong \mathrm{mod}(k)\cong\underline{\mathrm{mod}}(B)$, but $A$ is self-injective with infinite global dimension while $B$ does not satisfy any of these properties. An appropriate notion of equivalence between stable module categories is that of a ``stable equivalence of Morita type'', introduced by Brou\'{e} \cite[5.A~Definition]{Broue}. 

We recall Brou\'{e}'s definition: Given a field $k$ and two finite dimensional $k$--algebras $A$ and $B$, we say that a pair of bimodules $(\leftidx{_{B}}M_{A},\leftidx{_{A}}N_{B})$ defines a \textit{stable equivalence of Morita type} between $A$ and $B$, if $M$ (resp., $N$) is finitely generated and projective over $B$ and $A^{\op}$ (resp., over $A$ and $B^{\op}$), and if the following hold:
\begin{equation}
\label{Broue_dfn}
N\otimes_{B}M\cong A\,\,\, \mbox{in}\,\,\, \underline{\mathrm{mod}}(A^e)\,\,\,\,\,\, \mbox{and}\,\,\,\,\,\, M\otimes_{A}N\cong B\,\,\, \mbox{in}\,\,\, \underline{\mathrm{mod}}(B^e).
\end{equation}
Here $A^{e}$, resp., $B^{e}$, denotes the enveloping algebra of $A$, resp., $B$. In this situation there is an equivalence $M\otimes_{A}-\colon \underline{\mathrm{mod}}(A)\rightarrow\underline{\mathrm{mod}}(B)$ with inverse $N\otimes_{B}-$. These equivalences usually preserve important properties of the rings in question under mild assumptions, see for instance Liu and Xi \cite{Liu1,Liu2}.

We mention an interesting Theorem of Rickard~\cite{Rick} that we will generalize. It states that for self-injective $k$--algebras $A$ and $B$, whose semisimple quotients are separable, any stable equivalence induced from an exact functor $\leftidx{_{B}}M_{A}\otimes_{A}-\colon \modA\rightarrow \modB$, is necessarily a stable equivalence of Morita type. Thus the definition of stable equivalence of Morita type can be simplified for such self-injective algebras. The proof of this result makes use of the triangulated structure of the stable module categories of $A$ and $B$.

Brou\'{e}'s definition has been generalized by Chen and Sun~\cite{Chen-Sun}, and further by Wang~\cite{Wang}. Wang's definition only differs than that of Brou\'{e} in that the conditions in (\ref{Broue_dfn}) are now replaced by:
\begin{equation}
N\otimes_{B}M\cong \Omega_{A^{e}}^{l}(A)\,\,\, \mbox{in}\,\,\, \underline{\mathrm{mod}}(A^e)\,\,\,\,\,\, \mbox{and}\,\,\,\,\,\, M\otimes_{A}N\cong \Omega_{B^{e}}^{l}(B)\,\,\, \mbox{in}\,\,\, \underline{\mathrm{mod}}(B^e),\nonumber
\end{equation}
where $l\in\mathbb{N}$ and $\Omega_{A^{e}}(-)$, resp., $\Omega_{B^{e}}(-)$, denotes the syzygy endofunctor of the stable module category of $A^{e}$, resp., $B^{e}$. 
In this situation, there is a triangulated equivalence $M\otimes_{A}-\colon \Dsing(A)\rightarrow\Dsing(B)$ with inverse $\Sigma^{l}\circ(N\otimes_{B}-)$. Wang calls this a \textit{singular equivalence of Morita type with level $l$} between $A$ and $B$. This concept is relatively new but has attracted some attention, see for instance the articles \cite{Psa2,Ska,Wang2}.

In this paper, given finite dimensional algebras $A$ and $B$, we look at tensor product functors $F:=X\otimes_{A}^\textbf{L}-\colon \Der^{b}(\modA)\rightarrow \Der^{b}(\modB)$, where $X$ is a complex of $B$-$A^{\op}$--bimodules which is perfect over $B$ and over $A^{\op}$, and we are interested in necessary and sufficient conditions imposed on $X$ in order for the functor $F$ to induce a singular equivalence between $A$ and $B$. This approach is simple but gives some interesting results. For instance, Theorem~\ref{thm:suff_cond} is a ``bimodule version'' of a result of Oppermann-Psaroudakis-Stai~\cite[Prop.~3.7.1]{Psa-2018}, which recovers some known results from the literature on singular equivalences (see \ref{example:chen},~\ref{prop:idemp_without_level}) and gives some examples of singular equivalences of Morita type with level (see \ref{rmk_idempot_with_level},~\ref{example:morita_rings_level}).

Next, we look at singular equivalences of Morita type with level for Gorenstein algebras. We obtain the following which is our main result:

\begin{introthm}
Let $k$ be a field and let $A$ and $B$ be finite dimensional Gorenstein $k$--algebras with separable semisimple quotients. Consider a complex $X$ of finitely generated $B$-$A^{\op}$--bimodules which is perfect over $B$ and $A^{\op}$. If the (well defined) functor \[X\otimes_{A}^\textbf{L}-\colon \Der^{b}(\modA)\rightarrow \Der^{b}(\modB)\] restricts to a singular equivalence, then it induces a singular equivalence of Morita type with level.
\end{introthm}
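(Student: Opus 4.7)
The plan is to construct the inverse bimodule as a derived dual of $X$, to promote the resulting isomorphisms of complexes of one-sided modules (provided by the hypothesis that $F=X\otimes_{A}^{\mathbf{L}}-$ is a singular equivalence) to isomorphisms in the singularity categories of the enveloping algebras $A^{e}$ and $B^{e}$, and finally to replace the complexes by honest bimodules by taking a sufficiently high syzygy; the degree of that syzygy will be the level $l$.

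First I would form $Y:=\RHom_{B}(X,B)$, viewed as a complex of $A$-$B^{\op}$-bimodules. Since $X$ is perfect over $B$ and over $A^{\op}$, the natural map $Y\otimes_{B}^{\mathbf{L}}-\to\RHom_{B}(X,-)$ is an isomorphism and $G:=Y\otimes_{B}^{\mathbf{L}}-$ restricts to a functor $\Der^{b}(\modB)\to\Der^{b}(\modA)$ which is right adjoint to $F$. Because $F$ descends to an equivalence on singularity categories, $G$ must descend to the quasi-inverse; consequently the unit and counit of adjunction
\[\eta\colon A\longrightarrow Y\otimes_{B}^{\mathbf{L}}X\qquad\text{and}\qquad\varepsilon\colon X\otimes_{A}^{\mathbf{L}}Y\longrightarrow B\]
are isomorphisms in $\Dsing(A)$ and $\Dsing(B)$ respectively.

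The central step is to upgrade $\eta$ to an isomorphism in $\Dsing(A^{e})$, and likewise $\varepsilon$ in $\Dsing(B^{e})$. Separability of the semisimple quotients forces $A^{e}$ and $B^{e}$ to be Gorenstein, so by Buchweitz's theorem their singularity categories coincide with $\underline{\mathrm{MCM}}(A^{e})$ and $\underline{\mathrm{MCM}}(B^{e})$. Following the pattern of Rickard's argument in the self-injective case, the goal is to show that $\Cone(\eta)\in\Der^{b}(\mathrm{mod}(A^{e}))$ has finite projective dimension over $A^{e}$, given that it already has finite projective dimension as a one-sided $A$-module. This is where the hypotheses are crucial: the cone inherits two-sided perfection from $X$ and $Y$, and the separability of the semisimple quotient of $A$ then allows one to transfer ``perfect as an $A$-module'' to ``perfect as an $A^{e}$-module''. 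The analogous argument applied to $\varepsilon$ handles the $B^{e}$ side.

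Once the adjunction isomorphisms live in $\Dsing(A^{e})$ and $\Dsing(B^{e})$, I would choose $l\in\mathbb{N}$ large enough so that the $l$-th syzygy $M$ of $X$ in a projective resolution over $A^{e}$ is an honest bimodule; because projective $A^{e}$-modules are projective over $B$ and over $A^{\op}$ (both algebras being finite-dimensional over $k$), $M$ is automatically projective on each side. Define $N$ analogously from $Y$. Under the identifications $\Dsing(A^{e})\cong\underline{\mathrm{MCM}}(A^{e})$ and $\Dsing(B^{e})\cong\underline{\mathrm{MCM}}(B^{e})$ the suspension $\Sigma^{l}$ corresponds to $\Omega^{l}$, so the isomorphisms obtained above translate into $N\otimes_{B}M\cong\Omega^{l}_{A^{e}}(A)$ in $\underline{\mathrm{mod}}(A^{e})$ and $M\otimes_{A}N\cong\Omega^{l}_{B^{e}}(B)$ in $\underline{\mathrm{mod}}(B^{e})$, which is exactly Wang's definition. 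The main obstacle is the lifting to the enveloping-algebra singularity categories carried out in the third paragraph; this is the Gorenstein analog of the pivotal step in Rickard's proof, and separability together with the two-sided perfection of $X$ is precisely what is needed to push it through.
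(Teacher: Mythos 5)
Your proposal follows the paper's high-level strategy closely (derived dual $X^{\vee}$ for the inverse, lift the adjunction isomorphisms to $\Dsing(A^{e})$ and $\Dsing(B^{e})$, use Buchweitz's equivalence, and take a high syzygy to produce honest bimodules projective on each side), but two of the steps as you state them contain real gaps.

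First, the lifting step. You say that $\Cone(\eta)$ has finite projective dimension as a one-sided $A$-module, and that separability together with two-sided perfection transfers this to $A^{e}$-perfection. This implication is false: the diagonal bimodule $A$ is free on each side but its projective dimension over $A^{e}$ is the Hochschild dimension, which is infinite whenever $A$ has infinite global dimension. What the argument actually requires, and what the singular equivalence hypothesis supplies, is the stronger statement that $\Cone(\bar{\eta}_{A})\otimes_{A}^{\mathbf{L}}Z\in\perf(A)$ for \emph{every} $Z\in\Der^{b}(\modA)$ (equivalently for all simple $Z$). This, together with the separability of $A/\rad(A)$, is precisely the content of Lemma~\ref{lemma:perf_env} — the derived form of an Auslander--Reiten lemma — which is what promotes $\Cone(\bar{\eta}_{A})$ to $\perf(A^{e})$. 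The paper packages this as the converse direction of Theorem~\ref{thm:suff_cond}.

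Second, the translation to $\underline{\mathrm{mod}}(A^{e})$. Having $N\otimes_{B}M\cong\Omega^{l}_{A^{e}}(A)$ in $\Dsing(A^{e})$ does not by itself give the isomorphism in $\underline{\mathrm{mod}}(A^{e})$ required by Wang's definition: Buchweitz's equivalence $\Dsing(A^{e})\cong\underline{\mathrm{MCM}}(A^{e})$ sends a module to its maximal Cohen--Macaulay approximation, so a priori one only identifies the approximations. One must therefore check that $N\otimes_{B}M$ is itself in $\mathrm{MCM}(A^{e})$ (and $M\otimes_{A}N$ in $\mathrm{MCM}(B^{e})$); this is the paper's Lemma~\ref{Lem:CM}, which uses precisely that $M$ and $N$ are finitely generated projective on each side. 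This verification is absent from your proposal.

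Two smaller inaccuracies: the Gorensteinness of $A^{e}$ follows from $A$ being Gorenstein over a field (Bergh--Jorgensen), not from separability — separability's role is only in Lemma~\ref{lemma:perf_env}. And when taking a syzygy $M$ of $X$, the resolution should be over $B\otimes_{k}A^{\op}$ (not $A^{e}$), and $M$ is projective on each side not merely because projective bimodules restrict to projectives but because $\pd_{B}X$ and $\pd_{A^{\op}}X$ are finite, so a sufficiently high syzygy restricted to either side is a syzygy past the projective dimension; this is the content of Fact~\ref{prop:description_cpx}.
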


We note that separability is a minor technical assumption which is satisfied in most cases of interest, for example when $k$ is a perfect field; see \ref{subsec:Goresntein_rings}.

In terms of maximal Cohen-Macaulay modules over Gorenstein rings, we obtain the following. 

\begin{introcor}
Let $k$ be a field and let $A$ and $B$ be finite dimensional Gorenstein $k$--algebras with separable semisimple quotients. Consider a finitely generated $B$-$A^{\op}$--bimodule $M$ which is projective over $B$ and over $A^{\op}$. Denote $M^{\vee}:=\Hom_{B}(M,B)$.

If the (well-defined) functor $F:=M\otimes_{A}-\colon\mathrm{MCM}(A) \rightarrow\mathrm{MCM}(B)$ induces a triangulated equivalence $\overline{F}\colon\underline{\mathrm{MCM}}(A)\rightarrow\underline{\mathrm{MCM}}(B)$, then the pair $(M,\Omega^{l}_{A\otimes_{k}B^{\op}}M^{\vee})$ defines a singular equivalence of Morita type with level $l:=2\max\{\mathrm{vdim}(A),\mathrm{vdim}(B)\}$.
\end{introcor}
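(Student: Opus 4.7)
The plan is to reduce the corollary to the preceding main theorem and then to identify the abstract ``inverse'' bimodule it produces with a concrete syzygy of $M^{\vee}$.

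First I would observe that, since $M$ is projective as a right $A$-module, the functor $M\otimes_{A}-$ is exact and agrees with $M\otimes_{A}^{\mathbf{L}}-$ on $\Der^{b}(\modA)$; since $M$ is projective over $B$ it also sends perfect complexes to perfect complexes. The Gorenstein hypothesis allows us to invoke Buchweitz's triangle equivalences $\underline{\mathrm{MCM}}(A)\simeq\Dsing(A)$ and $\underline{\mathrm{MCM}}(B)\simeq\Dsing(B)$, so the assumption that $\overline{F}$ is a triangle equivalence of stable MCM categories translates directly into the statement that $M\otimes_{A}^{\mathbf{L}}-\colon\Der^{b}(\modA)\to\Der^{b}(\modB)$ restricts to a singular equivalence. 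The main theorem therefore applies to $X=M$ and produces some level $l_{0}\in\mathbb{N}$ together with a bimodule $N$ such that $(M,N)$ is a singular equivalence of Morita type with level $l_{0}$.

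Next I would identify $N$ explicitly. The key point is that the quasi-inverse of the singular equivalence $M\otimes_{A}^{\mathbf{L}}-$ is computed on $\Der^{b}$ by the right adjoint $\mathbf{R}\Hom_{B}(M,-)$, which---$M$ being perfect over $B$---is naturally isomorphic to $M^{\vee}\otimes_{B}^{\mathbf{L}}-$. Thus, as endofunctors of $\Dsing(B)$, the tensor functors $N\otimes_{B}^{\mathbf{L}}-$ and $\Sigma^{-l_{0}}(M^{\vee}\otimes_{B}^{\mathbf{L}}-)$ both serve as quasi-inverses of $M\otimes_{A}^{\mathbf{L}}-$. Evaluating this comparison at the regular bimodule $B$ and passing to $\underline{\mathrm{MCM}}(A\otimes_{k}B^{\op})$ via Buchweitz yields a stable isomorphism $N\cong\Omega^{l_{0}}_{A\otimes_{k}B^{\op}}M^{\vee}$, once $l_{0}$ is large enough that $\Omega^{l_{0}}_{A\otimes_{k}B^{\op}}M^{\vee}$ is maximal Cohen--Macaulay over $A\otimes_{k}B^{\op}$.

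Finally, to pin down the uniform value $l=2\max\{\mathrm{vdim}(A),\mathrm{vdim}(B)\}$, I would track the Gorenstein dimensions involved. The quantity $\mathrm{vdim}(A)$ bounds the number of syzygies needed to send an arbitrary finitely generated $A^{e}$-module into $\mathrm{MCM}(A^{e})$, and similarly for $B^{e}$ and for $A\otimes_{k}B^{\op}$; the factor of two arises by symmetrically absorbing these bounds so that \emph{both} the $A$-side and $B$-side conditions of Wang's definition, together with the identification of $N$ with a syzygy of $M^{\vee}$, can be read off in the MCM stable category at once. The main obstacle I expect is precisely this bookkeeping step: once the main theorem delivers an abstract level, the content of the corollary lies in verifying that the canonical right adjoint $M^{\vee}\otimes_{B}-$ differs from $N$ by exactly this controlled syzygy shift, and that the single choice $l=2\max\{\mathrm{vdim}(A),\mathrm{vdim}(B)\}$ suffices uniformly.
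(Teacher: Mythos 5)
Your overall strategy (reduce to the main theorem via Buchweitz, identify the resulting bimodule $N$, and pin down the level) matches the paper's outline, but the crucial identification step has a genuine gap. Having two natural isomorphisms of functors $N\otimes_{B}^{\mathbf{L}}-\cong\Sigma^{-l_{0}}(M^{\vee}\otimes_{B}^{\mathbf{L}}-)$ on $\Dsing(B)$ and then ``evaluating at $B$'' only gives an isomorphism in $\Dsing(A)$, not in $\Dsing(A\otimes_{k}B^{\op})$; to conclude $N\cong\Omega^{l_{0}}_{A\otimes_{k}B^{\op}}(M^{\vee})$ as a bimodule you would need a uniqueness-of-bimodules result for singular equivalences of Morita type with level, which you neither state nor prove. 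The paper sidesteps this entirely by \emph{not} treating the main theorem as a black box: its proof runs through the theorem's explicit construction, in which $N$ is taken to be $\mathscr{W}_{s'}$, the top term of a hard-truncated $A\otimes_{k}B^{\op}$-projective resolution of $X^{\vee}=M^{\vee}$, which (by the truncation fact in the paper) is precisely $\Omega^{s'}_{A\otimes_{k}B^{\op}}(M^{\vee})$. Since $M$ is a bimodule projective on both sides one has $s=0$, so the level is $s+s'=s'$.

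The bookkeeping you flag as an expected obstacle is resolved concretely in the paper and should not be left open: by the Bergh--Jorgensen lemma, $\mathrm{vdim}(A^{e})\leq 2\,\mathrm{vdim}(A)$ (and likewise for $B$), so $l=2\max\{\mathrm{vdim}(A),\mathrm{vdim}(B)\}$ forces $\Omega^{l}_{A^{e}}(A)$ and $\Omega^{l}_{B^{e}}(B)$ to land in the MCM categories; and Gorensteinness gives $\mathrm{pd}_{A}M^{\vee}\leq\mathrm{vdim}(A)\leq l$, so the truncation $\mathscr{W}$ at $s'=l$ is admissible. You also omit the verification (supplied in the paper) that $M\otimes_{A}-$ actually maps $\mathrm{MCM}(A)$ into $\mathrm{MCM}(B)$, which uses that $\Hom_{B}(M,B)$ has finite injective dimension over $A$ and hence $\Ext^{\geq 1}_{A}(N,\Hom_{B}(M,B))=0$ for $N\in\mathrm{MCM}(A)$.
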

Here $\mathrm{vdim}$ denotes the so-called virtual dimension of a Gorenstein ring which equals its injective dimension as a module over itself (on either side), see \ref{subsec:Goresntein_rings}.

For a self-injective algebra all finitely generated modules are maximal Cohen-Macaulay and its virtual dimension is zero (\ref{subsec:Goresntein_rings}). Also, a singular equivalence of Morita type with level zero is just a stable equivalence of Morita type in the sense of Brou\'{e} (\ref{dfn-morita-type}). Thus we recover the following:

\textit{Rickard's Theorem. Let $k$ be a field and let $A$ and $B$ be finite dimensional self-injective $k$--algebras with separable semisimple quotients. Assume that there exists an exact additive functor $F:=M\otimes_{A}-\colon\mathrm{mod}(A) \rightarrow\mathrm{mod}(B)$ that induces a triangulated equivalence $\overline{F}\colon\underline{\mathrm{mod}}(A) \rightarrow\underline{\mathrm{mod}}(B)$. Then the pair $(M,\Hom_{B}(M,B))$ defines a stable equivalence of Morita type between $A$ and $B$.
}

This result is originally from \cite[Thm.~3.2]{Rick}. The reader may also consult the book of Zimmermann~\cite[Prop.~5.3.17]{Zim} or Dugas and Mart\'{\i}nez-Villa \cite{Dugas-ViIla}.

The Theorem and its Corollary are proved in Section~\ref{sec:sing_eq_of_Morita_type}, see \ref{proof_main} and \ref{proof_corollary}.

\section{Preliminaries}
\label{sec:preliminaries}

\subsection{Complexes}
\label{subsec:comp}
Let $A$ be a ring. Throughout the text, $\ModA$  stands for the category of left $A$--modules while right $A$--modules are understood as modules over the ring $A^{\op}$. We denote by $\modA$ the subcategory of $\ModA$ which consists of finitely presented $A$--modules, while $\proj(A)$ denotes the subcategory of $\modA$ which consists of projective $A$--modules.

$\Cpx(A)$ denotes the category of chain complexes of $A$--modules, with homological indexing. 
A complex $X$ is called bounded above (resp., below)  if $X_{>>0}=0$ (resp., $X_{<<0}=0$), and is called homologically bounded above (resp., below) if $H_{>>0}(X)=0$ (resp., $H_{<<0}(X)=0$). A complex which is (homologically) bounded above and below is just called (homologically) bounded.

We recall a few things on homological dimensions of complexes. We say that a homologically bounded below complex $X$ in $\C(A)$ has $\pd_{A}X\leq n$ (resp., $\fd_{A}X\leq n$), for some $n\in\mathbb{N}$, if there exists a complex $P$ of projective (resp., flat) $A$--modules and a quasi-isomorphism $P\xrightarrow{\sim} X$, where $P_{j}=0$ for all $j>n$. Similarly, we say that a homologically bounded above complex $X$ in $\C(A)$ has $\id_{A}X\leq n$, for some $n\in\mathbb{N}$, if there exists a complex of injectives and a quasi-isomorphism $X\xrightarrow{\sim} I$, where $I_{j}=0$ for all $j<-n$.

We denote by $\Htp(A)$ the homotopy category of complexes of $A$--modules and by $\Der(A)$ its derived category. We denote by $\Sigma(-)$ the shift endofunctor and by $\Sigma^{n}(-)$ the n-fold composition of $\Sigma$ with itself.

\subsection{Resolutions with bimodules}
\label{subsec:res_bi} The following facts are well-known. The reader may consult for instance \cite[Ch.~7]{dcm}. 

Let $k$ be a commutative ring and let $A$ and $B$ be $k$--algebras. 

If $A$ is projective over $k$, then $B\otimes_{k}A^{\op}$ is projective over $B$, hence (semi)projective resolutions\footnote{The word ``semi'' refers to resolutions of unbounded complexes and is used here to simplify notation in what follows. The reader can restrict to classical projective resolutions but the necessary boundedness assumptions should be made.} over $B\otimes_{k}A^{\op}$ restrict to (semi)projective resolutions over $B$. In this case, the derived functor,
\[\RHom_{B}(-,-)\colon\Der(B\otimes_{k}A^{\op})^{\mathrm{op}}\times\Der(B)\rightarrow\Der(A),\]
may be computed by $\RHom_{B}(-,-)\cong\Hom_{B}(\mathscr{P}(-),-)$, where $\mathscr{P}(-)$ is the (semi)projective resolution endofunctor of $\Htp(B\otimes_{k}A^{\op})$.

If $B$ is flat over $k$, then $B\otimes_{k}A^{\op}$ is flat over $A^{\op}$, hence (semi)flat resolutions over $B\otimes_{k}A^{\op}$ restrict to (semi)flat resolutions over $A^{\op}$. In this case, the derived functor,
\[-\otimes_{k}^{\textbf{L}}-\colon \Der(B\otimes_{k}A^{\op})\times\Der(A)\rightarrow\Der(B),\]
may be computed by $-\otimes_{A}^{\textbf{L}}-\cong\mathscr{F}(-)\otimes_{A}-$, where $\mathscr{F}(-)$ is the (semi)flat resolution endofunctor on $\Htp(B\otimes_{k}A^{\op})$.

\subsection{Singularity categories}
\label{subsec:sing_cats}
In this subsection $A$ denotes a finite dimensional algebra over a field $k$. 
Denote by $\Htp^{b}(\modA)$ the bounded homotopy category of complexes which are degreewise finitely generated and by $\Der^{\mathrm{b}}(\mathrm{mod}(A))$ its derived category. Note that the objects of $\Der^{\mathrm{b}}(\mathrm{mod}(A))$ are chain complexes in $\mathrm{mod}(A)$ which are homologically bounded.

\begin{fact}
\label{fact_perf}
The following are equivalent for a homologically bounded below and degreewise finitely generated complex $X$ in $\Cpx(A)$.
\begin{itemize}
\item[(i)] $X$ is isomorphic in $\Der(A)$ to a bounded complex of finitely generated projective $A$--modules.
\item[(ii)] $\pd_{A}X\leq n$, for some $n\in\mathbb{Z}$.
\item[(iii)] For any homologically bounded complex $Y$, the complex $\RHom_{A}(X,Y)$ is homologically bounded.
\item[(iv)] $X$ belongs to the bounded homotopy category $\Htp^{\mathrm{b}}(\proj A)$.
\end{itemize}
\end{fact}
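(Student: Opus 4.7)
My plan is to prove the cyclic chain (i)$\Rightarrow$(iv)$\Rightarrow$(ii)$\Rightarrow$(iii)$\Rightarrow$(i), with (i)$\Leftrightarrow$(iv) being essentially definitional and the only nontrivial step being (iii)$\Rightarrow$(ii), where finite-dimensionality of $A$ enters essentially through Nakayama-lemma considerations on minimal projective resolutions.

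For (i)$\Leftrightarrow$(iv), observe that the canonical functor $\Htp^{\mathrm{b}}(\proj A)\to\Der(A)$ is fully faithful, so a complex is in the essential image iff it admits a representative in $\Htp^{\mathrm{b}}(\proj A)$; this is just a reformulation. For (i)$\Rightarrow$(ii) one just reads off that a bounded complex of projectives has finite projective dimension. For (ii)$\Rightarrow$(i), I would take a semiprojective resolution $P\xrightarrow{\sim}X$ which is degreewise finitely generated (possible because $A$ is left noetherian and $X$ is degreewise finitely generated and homologically bounded below); finite projective dimension then lets one perform the standard brutal truncation $\tau_{\le n}P$, whose degree-$n$ term becomes the kernel of the differential into degree $n-1$ and which is again projective and finitely generated because the resolution was chosen degreewise finitely generated over the noetherian ring $A$.

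For (i)$\Rightarrow$(iii), if $X\cong P\in\Htp^{\mathrm{b}}(\proj A)$ then $\RHom_A(X,Y)\cong\Hom_A(P,Y)$, and the total complex of a bounded-by-bounded Hom double complex is bounded.

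The main step is (iii)$\Rightarrow$(ii). Here I would exploit that $A$ is a finite dimensional $k$-algebra, hence semiperfect, so there is a semisimple module $S:=A/\rad(A)$ which is a finite direct sum of the simple $A$-modules, each appearing as the top of a finitely generated indecomposable projective. By hypothesis $\RHom_A(X,S)$ is homologically bounded, so there exists $n_0$ with $\Ext^m_A(X,S)=0$ for all $m>n_0$. Now take a minimal (semi)projective resolution $P\xrightarrow{\sim}X$; by Nakayama's lemma, the $k$-vector space $P_m\otimes_A S$ is isomorphic to $\Ext^m_A(X,S)$ up to identifications of simples with their projective covers, so vanishing of these Ext groups forces $P_m=0$ for $m>n_0$. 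Hence $\pd_A X\le n_0$, which gives (ii) and closes the loop.

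The only genuine obstacle is being careful that a minimal semiprojective resolution exists and has the expected Nakayama-type property in the unbounded / complex setting; this is standard for a left noetherian semiperfect ring over a homologically bounded below, degreewise finitely generated complex, but it is the one place where the hypothesis that $A$ is finite dimensional is really used. With this in hand, the cycle (i)$\Rightarrow$(iv)$\Rightarrow$(ii)$\Rightarrow$(iii)$\Rightarrow$(i) closes and gives the equivalence of all four conditions.
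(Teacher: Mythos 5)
The paper does not supply its own argument for this Fact---it simply cites \cite[Sec.~2P]{avf} and \cite[Th.~8.1.14]{dcm}---so there is no in-paper proof to compare against; but your proof is essentially the standard one and is correct in outline. The decomposition, with (i)$\Leftrightarrow$(iv) and (i)$\Leftrightarrow$(ii) being routine and the loop closed via (i)$\Rightarrow$(iii)$\Rightarrow$(ii), is sound, and the key step (iii)$\Rightarrow$(ii), reading projective dimension off a minimal projective resolution via $\Ext^{\bullet}_A(X,A/\rad(A))$, is indeed exactly where the finite-dimensionality of $A$ enters.

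Two small points of sloppiness are worth fixing. In (ii)$\Rightarrow$(i), what you describe as a ``brutal truncation'' with degree-$n$ term ``the kernel of the differential into degree $n-1$'' should be the soft (good) truncation, replacing $P_n$ by $\Coker(d_{n+1})$---equivalently the image of $d_n$, once one observes that $H_j(P)=0$ for $j>n$ follows from $\pd_A X\le n$. Moreover, the reason this cokernel is projective is the vanishing $\Ext^1_A(\Coker d_{n+1},-)\cong\Ext^{n+1}_A(X,-)=0$ supplied by (ii); noetherianness only accounts for finite generation, not projectivity. In (iii)$\Rightarrow$(ii), the object you want to compare with $\Ext^m_A(X,S)$ is $\Hom_A(P_m,S)$ (or $(A/\rad A)\otimes_A P_m$), not ``$P_m\otimes_A S$'', which is a variance mismatch for left modules; since each of these vanishes precisely when $P_m=0$, the Nakayama step and the conclusion $P_m=0$ for $m>n_0$ are unaffected.
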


\begin{proof}
See for instance \cite[Sec.~2P]{avf}, or \cite[Th.~8.1.14]{dcm}.
\end{proof}

\begin{definition}
\label{dfn_perf}
A complex in $\Cpx(A)$ which satisfies any of the equivalent conditions of Fact \ref{fact_perf} is called \textit{perfect}. We denote the category of perfect complexes by $\perf(A)$.
\end{definition}

\begin{definition}
The \textit{singularity category} of $A$ is the Verdier quotient $\Dsing(A):=\Der^{b}(\mathrm{mod}(A))/\perf(A)$.
\end{definition}

We also recall the \textit{stable module category} of $A$. It is the additive quotient $\stmod(A):=\modA/\sim$, where its objects are the same as those of $\modd(A)$ and two parallel morphisms are identified if they factor through a projective module. The syzygy endofunctor $\Omega_{A}(-)$ of $\stmod(A)$ maps an $A$--module $M$ to the kernel of a projective presentation of $M$. We denote by $\Omega^{n}_{A}(-)$ the $n$-fold composition of $\Omega_{A}(-)$ with itself.

There is a natural map $\modd(A)\rightarrow \Dsing(A)$ (that takes a module to its stalk complex, concentrated in degree zero), which factors through the stable module category to give a map $\pi:\stmod(A)\rightarrow \Dsing(A)$. It is an important property of the singularity category that the following diagram is commutative,

\begin{displaymath}
 \xymatrix@C=3pc{
\stmod(A)\ar[d]_-{\pi} \ar[r]^-{\Omega_{A}^{n}(-)}& \stmod(A) \ar[d]^-{\pi} \\
 \Dsing(A) \ar[r]^-{\Sigma^{-n}} & \Dsing(A).
 }
\end{displaymath}
See for instance \cite[page~925]{chenrad}.

\subsection{Special types of algebras}
\label{subsec:Goresntein_rings}
If $k$ is a commutative ring and $A$ is a $k$--algebra then $A^{e}:=A\otimes_{k}A^{\op}$ denotes the enveloping algebra of $A$.

We recall that a semisimple $k$--algebra ($k$ a field) is called \textit{separable} if its extension of scalars over any field extension of $k$ remains semisimple (see e.g. \cite[Ch.~1.7]{Reiner}). Now let $A$ be a finite dimensional $k$--algebra and assume that $A/\rad(A)$ is separable. In this case $A/\rad(A)\otimes_{k}A^{\op}/\rad(A^{\op})$ is semisimple and there is an isomorphism $A/\rad(A)\otimes_{k}A/\rad(A)\cong(A\otimes_{k} A^{\op})/\rad(A\otimes_{k}A^{\op})$ over $A^{e}$. This is practical because it implies that simple $A^{e}$--modules are summands of modules of the form $S\otimes_{k}S'$ where $S'$ is simple over $A^{\op}$ and $S$ is simple over $A$. For details see \cite[Cor.~5.3.10]{Zim}.

We recall that a finite dimensional $k$--algebra $A$ is said to have \textit{infinite global dimension} if there exists an $A$--module of infinite projective dimension. In \cite[1.5]{Ha} it is proved that if $\pd_{A^{e}}A<\infty$ then $A$ has finite global dimension.

We recall that a ring $A$ is called \textit{Gorenstein} if it is noetherian on both sides and has finite injective dimension as a module over itself on both sides. In this case from \cite{Zaks} we know that $\id_{A}A=n=\id_{A^{\op}}A$, for some natural number $n:=\mathrm{vdim}(A)$ which is called the \textit{virtual dimension} of $A$. In case $n=0$ the ring $A$ is called \textit{self-injective}.
For a Gorenstein ring $A$, we consider
\[\mathrm{MCM}(A):=\{M\in\modd(A)\, |\, \Ext^{\geqslant 1}_{A}(M,A)=0\},\]
the class of \textit{maximal Cohen-Macaulay} $A$--modules, which coincides with the class of the so-called (finitely generated) \textit{Gorenstein-projective} $A$--modules.  It is well known that this category is additive Frobenius, thus its stable category $\underline{\mathrm{MCM}}(A)$, which is defined in analogy with the stable module category mentioned above, is triangulated (see Happel~\cite[I.2]{Happel}). In case $A$ is self-injective we have $\mathrm{MCM}(A)=\modd(A)$. 

For a survey of Gorenstein homological algebra in the context of artinian algebras the interested reader may consult X.-W.~Chen \cite{Chen-Survey}.

\section{Singular equivalences induced from tensor products}
\label{sec:sing_eq_general}
We will make use of the following:
\begin{ipg}
\label{Setup}
\textbf{Setup} Let $k$ be a field and let $A$ and $B$ be finite dimensional $k$--algebras.
\end{ipg}

\begin{proposition}
\label{prop:adjoints}
Under Setup~\ref{Setup}, let $X$ be a complex of finitely generated $B$-$A^{\op}$--bimodules which is perfect over $B$ and over $A^{\op}.$ Then the following hold:
\begin{itemize}
\item[(i)] There exists an adjoint pair of exact (trianglulated) functors:
\begin{displaymath}
\label{adjunction1}
 \xymatrix@C=6pc{
 \mathbf{D}^b(\modA) \ar@<+0.7ex>[r]^-{F:=_{B}X_{A}\otimes^{\textbf{L}}_{A}-}   &  \mathbf{D}^b(\modB)  \ar@<+0.7ex>[l]^-{G:=\RHom_{B}(X,-)},
 }
 \tag{$\clubsuit$}
\end{displaymath}
where $F$ and $G$ may be computed by considering a projective resolution of $X$ over $B\otimes_{k}A^{\op}$.
\item[(ii)] There exists an isomorphism of functors $G\cong\RHom_{B}(X,B)\otimes_{B}^{\textbf{L}}-$.
\item[(iii)] The complex of $A$-$B^{\op}$--bimodules $X^{\vee}:=\RHom_{B}(X,B)$ is perfect over $B^{\op}$ (but not necessarily perfect over $A$).
\end{itemize}
\end{proposition}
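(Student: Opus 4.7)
The plan is to build all three statements from a single projective resolution $\pi\colon P\to X$ in $\Cpx(B\otimes_{k}A^{\op})$ and then deduce each part by restricting along the appropriate forgetful functor. The key input is that $A$ and $B$ are free over the field $k$, so by \ref{subsec:res_bi} the resolution $P$ restricts to a projective resolution of $X$ over $B$ and to a flat resolution over $A^{\op}$. Consequently, $F$ and $G$ are represented at the chain level by $P\otimes_{A}-$ and $\Hom_{B}(P,-)$ respectively, and the classical tensor-hom adjunction on complexes descends to the required adjunction on derived categories.

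To check that $F$ sends $\Der^{b}(\modA)$ into $\Der^{b}(\modB)$, I would, just for this step, replace $X$ by a bounded complex $Q$ of finitely generated projective $A^{\op}$-modules as provided by \ref{fact_perf}. For $M\in\Der^{b}(\modA)$ the complex $F(M)\simeq Q\otimes_{A}M$ is then bounded with each homology finite-dimensional over $k$, hence finitely generated over the finite-dimensional $k$-algebra $B$. The argument for $G$ is entirely symmetric, using perfectness of $X$ over $B$ to bound $\Hom_{B}(Q',N)$ for a perfect resolution $Q'$ of $X$ over $B$.

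For (ii), I would invoke the standard evaluation morphism $\RHom_{B}(X,B)\otimes_{B}^{\textbf{L}}-\to\RHom_{B}(X,-)$, which is an isomorphism whenever $X$ is perfect over $B$ (it holds trivially for $X=B$ and propagates through triangles and direct summands). Re-expressing both sides via the bimodule resolution $P$ shows that this natural map is compatible with the left $A$-action inherited from the right $A$-action on $P$, so the isomorphism upgrades to one of functors $\Der^{b}(\modB)\to\Der^{b}(\modA)$.

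For (iii), perfectness of $X$ over $B$ supplies a bounded complex $Q$ of finitely generated projective left $B$-modules quasi-isomorphic to $X$ in $\Der(B)$; then $\Hom_{B}(Q,B)$ is a bounded complex of finitely generated projective right $B$-modules. The comparison map of $B$-resolutions $Q\to P$ induces a quasi-isomorphism $\Hom_{B}(P,B)\xrightarrow{\sim}\Hom_{B}(Q,B)$ in $\Der(B^{\op})$, and since the former represents $X^{\vee}$ the latter shows $X^{\vee}\in\perf(B^{\op})$. The main obstacle throughout is bookkeeping: three distinct resolutions are in play (the bimodule resolution $P$, a perfect resolution over $B$, and one over $A^{\op}$), and at each step one must verify that the natural comparison respects the side whose structural property is being read off. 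Once the actions are tracked carefully, the arguments reduce to standard manipulations with perfect complexes and tensor-hom adjunction.
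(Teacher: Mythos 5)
Your proposal is correct and supplies exactly the standard argument the paper omits, built on the same device the statement itself suggests (a single projective resolution $P$ of $X$ over $B\otimes_{k}A^{\op}$, which by \ref{subsec:res_bi} restricts to a projective resolution over $B$ and a flat resolution over $A^{\op}$, whence the chain-level tensor--hom adjunction). One small point worth making fully explicit in the boundedness check for $F$: the perfect replacement $Q$ over $A^{\op}$ carries no $B$-action, so the comparison $F(M)\simeq Q\otimes_{A}M$ is only $k$-linear; the honest $B$-structure on $F(M)$ is read off $P\otimes_{A}M$, and the argument closes because a $B$-module that is finite-dimensional over $k$ is automatically finitely generated over the finite-dimensional algebra $B$ --- which is precisely the fact your last clause invokes.
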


\begin{proof} We omit the proof since it is standard. We just note that the assumptions on $X$ imply, in particular, that $F$ and $G$ are well-defined.
\end{proof}

\begin{remark}
\label{rem:adjoints}
(i) The functor $F$ in Proposition~\ref{prop:adjoints} maps $\perf(A)$ to $\perf(B)$. Indeed, $F(A)=X$ is perfect over $B$ and since $\mathrm{perf}(A)$ is generated by $A$ as a thick subcategory and $F$ is an exact functor, the result follows.

(ii) In general it is not true that the functor $G$ in Proposition~\ref{prop:adjoints} maps $\perf(B)$ to $\perf(A)$, unless we know that $G(B):=X^{\vee}$ is in $\perf(A)$. Indeed, in this case, the result follows since $\mathrm{perf}(B)$ is generated by $B$ as a thick subcategory and $G$ is an exact functor.
\end{remark}

A key technical point is that the functor $G$ in Proposition~\ref{prop:adjoints} maps $\perf(B)$ to $\perf(A)$ under the assumption that $A$ and $B$ are Gorenstein algebras.

\begin{lemma}
\label{lemma:Gor_adj}
Under Setup \ref{Setup}, assume in addition that $A$ and $B$ are Gorenstein $k$--algebras. Then the functor $G$ in Proposition~\ref{prop:adjoints} maps $\perf(B)$ to $\perf(A)$.
\end{lemma}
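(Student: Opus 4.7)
The plan is to reduce, via Remark~\ref{rem:adjoints}(ii), to proving $X^{\vee}:=\RHom_{B}(X,B)\in\perf(A)$. Because $A$ is Gorenstein, for $Y\in\Der^{b}(\modA)$ the condition $Y\in\perf(A)$ is equivalent to $\id_{A}Y<\infty$: a bounded complex of projectives inherits finite injective dimension from $\id_{A}A=\mathrm{vdim}(A)<\infty$, and conversely every injective $A$-module has projective dimension at most $\mathrm{vdim}(A)$, so a bounded complex of injectives is perfect. Hence it suffices to bound $\id_{A}X^{\vee}$.

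For this I would show that $\Ext^{i}_{A}(M,X^{\vee})=0$ for every $M\in\modA$ once $i$ exceeds a constant independent of $M$. The derived Hom-tensor adjunction for the bimodule $X$ gives
\[
\RHom_{A}(M,X^{\vee})\;\cong\;\RHom_{B}\!\bigl(X\otimes_{A}^{\textbf{L}}M,\,B\bigr),
\]
so $\Ext^{i}_{A}(M,X^{\vee})\cong\Ext^{i}_{B}(X\otimes_{A}^{\textbf{L}}M,B)$. Now both Gorenstein hypotheses enter. Since $X$ is perfect over $A^{\op}$, it is quasi-isomorphic in $\Der(A^{\op})$ to a bounded complex $F^{\bullet}$ of finitely generated projective $A^{\op}$-modules, and consequently $X\otimes_{A}^{\textbf{L}}M\cong F^{\bullet}\otimes_{A}M$ has cohomology confined to the fixed range of degrees occupied by $F^{\bullet}$, for every $M\in\modA$. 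Since $B$ is Gorenstein with $\id_{B}B=\mathrm{vdim}(B)<\infty$, the hyper-Ext $\Ext^{i}_{B}(N,B)$ vanishes once $i$ exceeds a constant determined by the cohomological amplitude of $N$ and by $\mathrm{vdim}(B)$ (for instance via an injective resolution of $B$ of length $\mathrm{vdim}(B)$). Feeding the amplitude estimate for $N=X\otimes_{A}^{\textbf{L}}M$ into this bound yields $\Ext^{i}_{A}(M,X^{\vee})=0$ uniformly in $M$ for all sufficiently large $i$, whence $\id_{A}X^{\vee}<\infty$.

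The delicate point in this scheme is the bookkeeping in the amplitude estimate: $F^{\bullet}$ resolves $X$ only as an $A^{\op}$-module, whereas the left $B$-module structure of $X\otimes_{A}^{\textbf{L}}M$ is naturally produced from a bimodule projective resolution of $X$ over $B\otimes_{k}A^{\op}$ (cf.~\ref{subsec:res_bi}). This causes no trouble because the amplitude estimate only involves the underlying cohomology groups of $X\otimes_{A}^{\textbf{L}}M$, and these are computed equally well from the $A^{\op}$-flat resolution $F^{\bullet}$. Modulo this observation, the argument is formal from the adjunction together with the two Gorenstein hypotheses.
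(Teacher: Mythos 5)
Your proof is correct and follows essentially the same route as the paper: reduce via Remark~\ref{rem:adjoints}(ii) to showing $X^{\vee}\in\perf(A)$, use the adjunction isomorphism $\RHom_{A}(-,X^{\vee})\cong\RHom_{B}(X\otimes_{A}^{\textbf{L}}-,B)$ together with $\pd_{A^{\op}}X<\infty$ and $\id_{B}B<\infty$ to bound $\id_{A}X^{\vee}$, and then invoke Gorensteinness of $A$ to convert finite injective dimension into finite projective dimension. The only small point you leave tacit (and the paper records) is that $X^{\vee}$ is degreewise finitely generated over $A$, which is what lets one pass from ``finite injective dimension'' to membership in $\perf(A)$.
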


\begin{proof}
From Remark \ref{rem:adjoints}~(ii), it suffices to prove that $G(B)=\RHom_{B}(X,B)=:X^{\vee}$ belongs to $\perf(A)$. First note that if $\mathscr{P}\rightarrow X$ is a projective resolution of $X$ over $B\otimes_{k}A^{\op}$, then $X^{\vee}\cong\Hom_{B}(\mathscr{P},B)$ is degreewise finitely generated over $A$, thus we only need to prove that $\pd_{A}X^{\vee}<\infty$. To see this consider the natural isomorphism:
\[\RHom_{A}(-,\RHom_{B}(X,B))\cong\RHom_{B}(X\otimes^{\mathbf{L}}_{A}-,B).\]
If we input a homologically bounded complex of $A$--modules in this isomorprhism, on the right hand side we will obtain a homologically bounded complex (since $\pd_{A^{\op}}X<\infty$ and $\id_{B}B<\infty$). This shows that $\id_{A} X^{\vee}<\infty$. Since $A$ is Gorenstein this is equivalent to $\pd_{A}X^{\vee}<\infty$, which finishes the proof.
\end{proof}

In Theorem \ref{thm:suff_cond} below, we give necessary and sufficient conditions on the functor $F$ in Proposition~\ref{prop:adjoints} to induce a singular equivalence. We will need the following Lemma, which in the module case is a known result of Auslander and Reiten~\cite[Prop.~5.3.11]{Zim}. It is for this reason that we need to restrict to finite dimensional algebras with separable semisimple quotient in the sequel. 

\begin{lemma}
\label{lemma:perf_env}
Let $A$ be a finite dimensional $k$--algebra where $k$ is a field. Let $C$ be a complex of finitely generated $A^{e}$--modules. Consider the following:
\begin{itemize}
\item[(i)] $C\in\perf(A^{e})$.
\item[(ii)] For all complexes $Z\in\Der^b(\modA)$; $C\otimes_{A}^{\textbf{L}}Z\in\perf(A)$.
\item[(iii)] For all simple $A$--modules $N$; $C\otimes_{A}^{\mathbf{L}}N\in\perf(A)$.
\end{itemize}
Then the implications $(i)\Rightarrow(ii)\Rightarrow(iii)$ hold. Moreover, in case $A/\rad(A)$ is separable all statements are equivalent.
\end{lemma}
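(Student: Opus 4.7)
The plan is to prove the three implications in turn. The implication $(ii)\Rightarrow(iii)$ is immediate, as every simple $A$-module lies in $\Der^b(\modA)$.

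For $(i)\Rightarrow(ii)$, I would use that $\perf(A^e)$ coincides with the thick subcategory $\thick(A^e)\subseteq \Der(A^e)$ generated by $A^e$, and that $-\otimes_A^{\mathbf{L}} Z \colon \Der(A^e)\to \Der(A)$ is a triangulated functor preserving direct summands. It therefore suffices to verify that $A^e\otimes_A^{\mathbf{L}} Z$ lies in $\perf(A)$. Viewing $A^e=A\otimes_k A^{\op}$ as a right $A$-module through its $A^{\op}$ factor makes it free, so $A^e\otimes_A^{\mathbf{L}} Z \cong A\otimes_k Z$ as a left $A$-module, with $A$ acting on the first factor. Since $A$ is finite-dimensional over $k$, any representative of $Z$ as a bounded complex of finitely generated $A$-modules has finite total $k$-dimension, so $A\otimes_k Z$ is a bounded complex of finite free $A$-modules, i.e.\ perfect.

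For the harder implication $(iii)\Rightarrow(i)$ under the separability assumption, I would invoke Fact~\ref{fact_perf}(iii) and verify that $\RHom_{A^e}(C,Y)$ is homologically bounded for every $Y\in\Der^b(\mathrm{mod}(A^e))$. By good truncation together with the long exact sequence in cohomology, and then by d\'evissage along composition series (available because $A^e$ is artinian, being finite-dimensional over $k$), this reduces to $Y$ a simple $A^e$-module. The separability of $A/\rad(A)$ enters precisely here (cf.~\ref{subsec:Goresntein_rings}): every simple $A^e$-module is a direct summand of some bimodule $S\otimes_k T$ with $S$ simple over $A$ and $T$ simple over $A^{\op}$, so it is enough to handle $Y = S\otimes_k T$. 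The key step is the tensor-hom adjunction
\[
\RHom_{A^e}\bigl(C,\, \Hom_k(N,S)\bigr) \;\cong\; \RHom_A\bigl(C\otimes_A^{\mathbf{L}} N,\, S\bigr),
\]
whose derived form is valid because any projective resolution of $C$ over $A^e$ is automatically flat as a right $A$-module, since projective $A^e$-modules are summands of $A^e=A\otimes_k A^{\op}$, which is free on the right. As $T$ is finite-dimensional there is a bimodule identification $S\otimes_k T\cong \Hom_k(T^*, S)$, where $T^*$ is the $k$-dual of $T$, naturally a simple left $A$-module. Setting $N=T^*$ yields
\[
\RHom_{A^e}(C, S\otimes_k T)\;\cong\;\RHom_A(C\otimes_A^{\mathbf{L}} T^*,\, S),
\]
whose right-hand side is homologically bounded by hypothesis~(iii) combined with Fact~\ref{fact_perf}.

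The main obstacle will be the careful bookkeeping of the left/right $A$-actions required to set up the adjunction in bimodule form, together with arranging the d\'evissage so that boundedness propagates through the composition-factor filtrations and through the finitely many isomorphism classes of pairs of simples.
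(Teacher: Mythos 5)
Your argument is correct, and the directions $(ii)\Rightarrow(iii)$ and $(i)\Rightarrow(ii)$ coincide with the paper's. For $(iii)\Rightarrow(i)$ you take the dual route, working with $\RHom$ and projective dimension where the paper works with $\Tor$ and flat dimension. The paper notes that since $A^e$ is noetherian it suffices to bound $\fd_{A^e}C$, which can be tested on simple $A^e$-modules $S$; after applying separability to reduce to $S'\otimes_k S''$ (with $S'$ simple over $A$, $S''$ simple over $A^{\op}$), the paper closes with the single $k$-linear isomorphism
\[
(S'\otimes_k S'')\otimes_{A^e}^{\mathbf{L}} C \;\cong\; S''\otimes_A^{\mathbf{L}} C\otimes_A^{\mathbf{L}} S',
\]
and then invokes hypothesis $(iii)$ via $C\otimes_A^{\mathbf{L}}S'\in\perf(A)$. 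You instead use Fact~\ref{fact_perf}(iii) over $A^e$, reduce to simple $Y$ by d\'evissage (correct, since $A^e$ is finite dimensional and has finitely many simples, so a uniform bound on $\RHom_{A^e}(C,S)$ over all simples $S$ propagates through composition series and truncations), and then land on the adjunction $\RHom_{A^e}(C,\Hom_k(N,S))\cong\RHom_A(C\otimes_A^{\mathbf{L}}N,S)$, which needs the additional observation $S\otimes_k T\cong\Hom_k(T^*,S)$ as $A^e$-modules. Your justification that the adjunction passes to the derived level --- projective $A^e$-modules are summands of $A^e$, which is free on the right over $A$ --- is exactly right. The net effect is the same; the paper's choice of the flat/$\Tor$ side makes the final step a one-line associativity isomorphism for the tensor product rather than a two-step adjunction plus duality identification, so it is somewhat leaner, but both are sound.
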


\begin{proof}
$(i)\Rightarrow (ii)$. We note that for any complex $Z$ in $\Der^b(\modA)$ the functor $-\otimes_{A}^{\textbf{L}}Z$ maps $A^e$ to $A^e\otimes_{A}^{\textbf{L}}Z\cong A\otimes_{k}Z\in\mathrm{perf}(A)$. Since $A^e$ generates $\perf(A^e)$ as a thick subcategory and the functor $-\otimes_{A}^{\textbf{L}}Z$ is exact the result follows.

We now prove that $(iii)\Rightarrow (i)$ under the assumption that $A/\rad(A)$ is separable. Since $A^{e}$ is noetherian it suffices to show that $\fd_{A^e}C<\infty$. For this, it suffices to show that for any simple $A^e$--module $S$, the complex $S\otimes_{A^e}^{\mathbf{L}}C$ is homologically bounded. From the assumption that $A/\rad(A)$ is separable, as we recalled in \ref{subsec:Goresntein_rings}, we know that all simple $A^e$--modules are direct summands of modules of the form $S'\otimes_{k}S''$, where $S'$ is a simple $A$--module and $S''$ is a simple $A^{\op}$--module, hence it suffices to prove that for such modules the complex $(S'\otimes_{k}S'')\otimes_{A^e}^{\mathbf{L}}C$ is homologically bounded.
We have a $k$-linear isomorphism of complexes, \[(S'\otimes_{k}S'')\otimes_{A^{e}}^{\mathbf{L}} C\cong S''\otimes_{A}^{\mathbf{L}} C\otimes_{A}^{\mathbf{L}} S'.\] By assumption, the complex $C\otimes_{A}^{\mathbf{L}} S'$ is in $\perf(A)$, hence the complex  $S''\otimes_{A}^{\mathbf{L}} (C\otimes_{A}^{\mathbf{L}} S')$ is homologically bounded, which finishes the proof.
\end{proof}

\begin{theorem}
\label{thm:suff_cond}
Let $k$ be a field, let $A$ and $B$ be finite dimensional $k$--algebras, and let $X$ be a complex of finitely generated $B$-$A^{\op}$--bimodules which is perfect over $B$ and over $A^{\op}$. Assume that $X^{\vee}:=\RHom_{B}(M,B)$ is a perfect complex of $A$--modules.

If the natural maps $A\rightarrow\RHom_{B}(X,X)$ in $\mathbf{D}_{\mathrm{sg}}(A^e)$ and $\RHom_{A}(X^{\vee},X^{\vee})\rightarrow B$ in $\mathbf{D}_{\mathrm{sg}}(B^e)$ are isomorphisms, then the functor $X\otimes^{\textbf{L}}_{A}-\colon\mathbf{D}_{\mathrm{sg}}(A)\rightarrow \mathbf{D}_{\mathrm{sg}}(B)$ is a triangulated equivalence (with inverse $X^{\vee}\otimes^{\textbf{L}}_{B}-$). The converse holds under the assumption that $A/\rad(A)$ and $B/\rad(B)$ are separable.
\end{theorem}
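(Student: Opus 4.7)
The plan is to descend the adjunction $(F,G)$ of Proposition~\ref{prop:adjoints} to the singularity categories, then apply Lemma~\ref{lemma:perf_env} twice to translate between a two-sided (bimodule) condition on $\Cone(\eta_{A})$ and $\Cone(\varepsilon_{B})$ and a one-sided condition saying that $\bar F$ and $\bar G$ are mutually inverse. First, both $F$ and $G$ preserve perfect complexes: $F$ by Remark~\ref{rem:adjoints}(i), and $G$ by Remark~\ref{rem:adjoints}(ii) together with the hypothesis $X^{\vee}\in\perf(A)$. Hence they induce an adjoint pair $\bar F\dashv\bar G$ between $\Dsing(A)$ and $\Dsing(B)$, whose unit $\bar\eta$ and counit $\bar\varepsilon$ are the images of those of $(F,G)$ under Verdier localisation.

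The next step is to identify $\eta_{Z}$ and $\varepsilon_{W}$ at arbitrary $Z\in\Der^{b}(\modA)$, $W\in\Der^{b}(\modB)$ in terms of $\eta_{A}$ and $\varepsilon_{B}$. Because $X$ is perfect over $B$ there is a natural isomorphism $\RHom_{B}(X,X)\otimes_{A}^{\mathbf{L}}Z\xrightarrow{\sim}\RHom_{B}(X,X\otimes_{A}^{\mathbf{L}}Z)$, and combining it with the naturality of $\eta$ yields
\[
\Cone(\eta_{Z})\;\cong\;\Cone(\eta_{A})\otimes_{A}^{\mathbf{L}}Z \quad\text{in }\Der(A).
\]
Similarly, since $X^{\vee}$ is perfect over $B^{\op}$ by Proposition~\ref{prop:adjoints}(iii), one has $\RHom_{B}(X,W)\cong X^{\vee}\otimes_{B}^{\mathbf{L}}W$ naturally, so that
\[
\Cone(\varepsilon_{W})\;\cong\;\Cone(\varepsilon_{B})\otimes_{B}^{\mathbf{L}}W \quad\text{in }\Der(B),
\]
where the map $\RHom_{A}(X^{\vee},X^{\vee})\to B$ of the theorem is identified with the counit $\varepsilon_{B}\colon X\otimes_{A}^{\mathbf{L}}X^{\vee}\to B$ via the natural iso $\RHom_{A}(X^{\vee},X^{\vee})\cong X\otimes_{A}^{\mathbf{L}}X^{\vee}$ arising from $X^{\vee}\in\perf(A)$.

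For the forward implication, assume $\Cone(\eta_{A})\in\perf(A^{e})$ and $\Cone(\varepsilon_{B})\in\perf(B^{e})$. The implication $(i)\Rightarrow(ii)$ of Lemma~\ref{lemma:perf_env} (which requires no separability) then gives $\Cone(\eta_{Z})\in\perf(A)$ and $\Cone(\varepsilon_{W})\in\perf(B)$ for all $Z,W$, so $\bar\eta$ and $\bar\varepsilon$ are natural isomorphisms and $\bar F$ is a triangulated equivalence with inverse $\bar G$. For the converse, assume the separability hypotheses and that $\bar F$ is an equivalence. Since $\bar F\dashv\bar G$ and $\bar F$ is an equivalence, $\bar\eta$ and $\bar\varepsilon$ are necessarily isomorphisms; in particular, for every simple $A$--module $N$, $\Cone(\eta_{A})\otimes_{A}^{\mathbf{L}}N\in\perf(A)$. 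The implication $(iii)\Rightarrow(i)$ of Lemma~\ref{lemma:perf_env}, which is precisely where separability of $A/\rad(A)$ is needed, upgrades this to $\Cone(\eta_{A})\in\perf(A^{e})$, i.e.\ $\eta_{A}$ becomes an isomorphism in $\Dsing(A^{e})$; the symmetric argument over $B$ handles $\varepsilon_{B}$. The main subtle step is this last one: separability is used exclusively to promote a ``perfect after tensoring with every simple'' condition to bimodule perfection via Lemma~\ref{lemma:perf_env}, and this is where the nontrivial content of the converse lies.
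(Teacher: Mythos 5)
Your proof is correct and follows essentially the same route as the paper: descend the adjunction $(F,G)$ to singularity categories, identify $\Cone(\bar\eta_Z)$ with $\Cone(\bar\eta_A)\otimes_A^{\mathbf{L}}Z$ (resp. $\Cone(\bar\epsilon_W)$ with $\Cone(\bar\epsilon_B)\otimes_B^{\mathbf{L}}W$), and apply both implications of Lemma~\ref{lemma:perf_env} to pass between the bimodule-level conditions and the module-level conditions, with separability entering exactly in the converse via $(iii)\Rightarrow(i)$. One small slip: the natural isomorphism $\RHom_{B}(X,W)\cong X^{\vee}\otimes_{B}^{\mathbf{L}}W$ is a consequence of $X$ being perfect over $B$ (Proposition~\ref{prop:adjoints}(ii)), not of $X^{\vee}$ being perfect over $B^{\op}$ as you cite.
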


\begin{proof}
From Remark~\ref{rem:adjoints} and our assumptions we have that the adjunction (\ref{adjunction1}) from Proposition~\ref{prop:adjoints} restricts to 
\begin{displaymath}
 \xymatrix@C=5pc{
 \Dsing(A) \ar@<+0.7ex>[r]^-{\bar{F}:=\leftidx{_{B}}X_{A}\otimes^{\textbf{L}}_{A}-}   & \Dsing(B).  \ar@<+0.7ex>[l]^-{\bar{G}:=\leftidx{_{A}}X^{\vee}_{B}\otimes^{\textbf{L}}_{B}-}
  }
\end{displaymath}
We investigate when the unit $\bar{\eta}$ and the counit $\bar{\epsilon}$ of $(\bar{F},\bar{G})$ are isomorphisms. 
Consider a complex $Y$ in $\mathbf{D}^b(\modB)$. If $\rho\colon\mathscr{P}\xrightarrow{\sim}X$ is a projective resolution of $X$ over $B\otimes_{k}A^{\op}$, we have a commutative diagram in $\mathbf{D}^b(\modB)$,

\begin{displaymath}
 \xymatrix@C=3pc{
(\leftidx{_{B}}X_{A}\otimes^{\textbf{L}}_{A}X^{\vee}_{B})\otimes_{B}^{\textbf{L}}Y \ar[r]^-{\bar{\epsilon}_{Y}} & Y \ar[r] & \Cone(\bar{\epsilon}_{Y}) \ar[r] &  \\
 \leftidx{_{B}}(\mathscr{P}\otimes_{A} X^{\vee}_{B})\otimes_{B}^{\textbf{L}}Y \ar[u]^-{(\rho\otimes X^{\vee})\stackrel{\textbf{L}}{\otimes} Y}_-{\cong} \ar[r]^-{\bar{\epsilon}_{B}\otimes Y} &  B\otimes_{B}^{\textbf{L}} Y \ar[r] \ar[u]_-{\cong} & \Cone(\bar{\epsilon}_{B})\otimes_{B}^{\textbf{L}}Y \ar[r] \ar[u] & .
 }
\end{displaymath}
Hence $\Cone(\bar{\epsilon}_{Y})\in\perf(B)$ if and only if $\Cone(\bar{\epsilon}_{B})\otimes_{B}^{\textbf{L}}Y\in\perf(B)$. 
Similarly, if $Z$ is a complex in $\Der^{b}(\modA)$, one can show that $\Cone(\bar{\eta}_{Z})\in\perf(A)$ if and only if $\Cone(\bar{\eta}_{A})\otimes_{A}^{\textbf{L}}Z\in\perf(A)$.

The assumptions imply that $X^{\vee}\otimes_{B}^{\textbf{L}}X=\RHom_{B}(X,X)\cong A$ in $\mathbf{D}_{\mathrm{sg}}(A^e)$ and that $X\otimes_{A}^{\textbf{L}}X^{\vee}=\RHom_{A}(X^{\vee},X^{\vee})\cong B$ in $\mathbf{D}_{\mathrm{sg}}(B^e)$, thus $\Cone(\bar{\eta}_{A})\in\perf(A^{e})$ and $\Cone(\bar{\epsilon}_{B})\in\perf(B^{e})$, respectively. Therefore we see that the proof is finished if we employ Lemma~\ref{lemma:perf_env}.
\end{proof}

\begin{corollary}
\label{cor:homomorphisms}
Let $k$ be a field and let $A\rightarrow B$ be a homomorphism of finite dimensional $k$--algebras, where $B$ has finite projective dimension on both sides over $A$. If each of the following conditions is satisfied:
\begin{itemize}
\item[(i)] The cone of $A\rightarrow B$ in $\mathbf{D}^{b}(\mathrm{mod}(A^e))$ is perfect,
\item[(ii)] The cone of the natural map $B\otimes_{A}^{\textbf{L}}B\rightarrow B$ in $\mathbf{D}^{b}(\mathrm{mod}(B^e))$ is perfect,
\end{itemize}
then the functor $B\otimes_{A}^{\textbf{L}}-\colon \Dsing(A)\rightarrow \Dsing(B)$ is a triangulated equivalence. The converse holds in case $A/\rad(A)$ and $B/\rad(B)$ are separable.
\end{corollary}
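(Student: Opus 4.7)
\medskip

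\noindent\textbf{Proof proposal.} The strategy is to deduce the corollary from Theorem~\ref{thm:suff_cond} by taking for $X$ the complex $B$ itself, viewed as a $B$-$A^{\op}$--bimodule via the given homomorphism $A\to B$.

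First I would check that this choice of $X$ satisfies the standing hypotheses of Theorem~\ref{thm:suff_cond}. The bimodule $X=B$ is free of rank one over $B$, hence perfect over $B$. Since $B$ is finite dimensional over $k$, it is in particular finitely generated both as a left and as a right $A$-module, and the assumption that $B$ has finite projective dimension on both sides over $A$ makes $X$ perfect over $A^{\op}$ and $X^{\vee}=\RHom_{B}(B,B)\cong B$ perfect over $A$.

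Next I would identify the two natural maps appearing in Theorem~\ref{thm:suff_cond} with the maps of the corollary. The canonical morphism $A\to\RHom_{B}(X,X)\cong B$ in $\Der^{b}(\mathrm{mod}(A^{e}))$ is precisely the algebra map $A\to B$, so condition~(i) is exactly the statement that its cone is perfect over $A^{e}$, i.e.\ that this map is invertible in $\Dsing(A^{e})$. Using the perfectness of $X$ over $B$, the proof of Theorem~\ref{thm:suff_cond} supplies an isomorphism $\RHom_{A}(X^{\vee},X^{\vee})\cong X\otimes_{A}^{\textbf{L}}X^{\vee}=B\otimes_{A}^{\textbf{L}}B$ under which the counit becomes the multiplication map $B\otimes_{A}^{\textbf{L}}B\to B$; condition~(ii) then says precisely that this map is an isomorphism in $\Dsing(B^{e})$.

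With both hypotheses of Theorem~\ref{thm:suff_cond} verified, that theorem yields the triangulated equivalence $B\otimes_{A}^{\textbf{L}}-\colon\Dsing(A)\to\Dsing(B)$. The converse is obtained by running the same identifications in reverse and invoking the converse direction of Theorem~\ref{thm:suff_cond}, whose separability assumption on $A/\rad(A)$ and $B/\rad(B)$ is the one assumed in the corollary.

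The main obstacle I expect is purely bookkeeping: one has to verify that the natural $A^{e}$- and $B^{e}$-bimodule structures on $\RHom_{B}(X,X)$ and $\RHom_{A}(X^{\vee},X^{\vee})$ agree, after the identifications $X=X^{\vee}=B$, with the bimodule structure on $B$ pulled back along $A\to B$ and with the multiplication $B\otimes_{A}^{\textbf{L}}B\to B$. Beyond this compatibility check the corollary is a direct specialisation of Theorem~\ref{thm:suff_cond}.
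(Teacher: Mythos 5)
Your proposal is correct and takes essentially the same route as the paper's proof, which simply remarks that the Corollary follows from Theorem~\ref{thm:suff_cond} by considering the functor $B\otimes_{A}^{\textbf{L}}-$ (whose right adjoint is restriction of scalars), i.e.\ by specialising to $X=B$ exactly as you do. The one small slip is attributional: the identification $\RHom_{A}(X^{\vee},X^{\vee})\cong X\otimes_{A}^{\textbf{L}}X^{\vee}$ used in the proof of Theorem~\ref{thm:suff_cond} rests on the perfectness of $X^{\vee}$ over $A$ (combined with biduality), not on the perfectness of $X$ over $B$, but this does not affect your argument.
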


\begin{proof}
This follows directly from Theorem~\ref{thm:suff_cond} if we consider the functor $B\otimes_{A}^{\textbf{L}}-\colon\Der^{b}(\modd(A))\rightarrow \Der^{b}(\modd(B))$, which is left adjoint to restriction of scalars.
\end{proof}

\begin{example}
\label{example:chen}
Let $k$ be a field, let $A$ be a finite dimensional $k$ algebra, and let $I$ be an ideal of $A$ which has finite projective dimension over $A^{e}$. Consider the canonical map $A\rightarrow A/I$. If condition (ii) of Corollary \ref{cor:homomorphisms} is satisfied (with $B:=A/I$), we obtain $\Dsing(A)\cong\Dsing(A/I)$. Condition (ii) is trivially satisfied if $I$ is an idempotent ideal and $\Tor_{\geqslant 1}^{A}(A/I,A/I)=0$, equivalently, if the canonical map $A/I\otimes_{A}^{\textbf{L}}A/I\rightarrow A/I$ is an isomorphism in $\Der^{b}(\mathrm{mod}(B^{e}))$. This is the content of a result of X.-W.~Chen~\cite{Chen}.
\end{example}

Recall that if $\Lambda$ is a finite dimensional algebra over a field $k$ and $e$ is an idempotent in $\Lambda$, there exists an adjoint pair
\begin{displaymath}
 \xymatrix@C=4pc{
\Der^{b}(\modd(e\Lambda e)) \ar@<0.9ex>[r]^-{\Lambda e \otimes^{\textbf{L}}_{e\Lambda e}-}  & \Der^{b}(\modd(\Lambda)) \ar[l]^-{e\Lambda \otimes_{\Lambda}-},
  }
\end{displaymath}
where the right adjoint $e\Lambda \otimes_{\Lambda}-$ is isomorphic to the functor $e(-)$, which is mupltiplication by $e$ and is an exact functor.

There is one more easy consequence of Theorem \ref{thm:suff_cond} which has been discussed (in a more general context) in \cite[Main Theorem~(ii)]{Psa3}.

\begin{proposition}
\label{prop:idemp_without_level}
Let $\Lambda$ be a finite dimensional $k$--algebra over a field $k$, let $e$ be an idempotent in $\Lambda$, and assume any of the following two:
\begin{itemize}
\item[(i)] $\pd_{(e\Lambda e)^{\op}}(\Lambda e)<\infty$ and $e\Lambda\in\proj(e\Lambda e)$, or
\item[(ii)]$\Lambda e\in\proj(e\Lambda e)^{\op}$ and $\pd_{e\Lambda e}(e\Lambda)<\infty$.
 \end{itemize}

Then in case $\pd_{\Lambda^{e}}(\Lambda/\Lambda e\Lambda)<\infty$ the functor $e\Lambda\otimes_{\Lambda}-$ induces a singular equivalence between $\Lambda$ and $e\Lambda e$. The converse holds if $\Lambda/\rad(\Lambda)$ and $e\Lambda e/\rad(e\Lambda e)$ are separable. 
\end{proposition}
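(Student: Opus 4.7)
The plan is to apply Theorem~\ref{thm:suff_cond} with $A:=e\Lambda e$, $B:=\Lambda$, and $X:=\Lambda e$ viewed as a $B$-$A^{\op}$--bimodule. Once its hypotheses are checked, the theorem will deliver a singular equivalence $\Lambda e\otimes^{\mathbf L}_{e\Lambda e}-\colon\Dsing(e\Lambda e)\to\Dsing(\Lambda)$, whose quasi-inverse $X^\vee\otimes^{\mathbf L}_B-$ will be identified with the functor $e\Lambda\otimes_\Lambda-$ of the statement.

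The perfectness hypotheses are clear. The bimodule $\Lambda e$ is a direct summand of $\Lambda$ as a left $\Lambda$--module, hence perfect over $B$; and it is perfect over $A^{\op}$ either by $\pd_{(e\Lambda e)^{\op}}(\Lambda e)<\infty$ in case~(i) or by right-$e\Lambda e$--projectivity in case~(ii). Since $\Lambda e$ is projective on the left over $\Lambda$, evaluation at $e$ identifies
\[
X^\vee=\RHom_\Lambda(\Lambda e,\Lambda)\cong e\Lambda
\]
as $A$-$B^{\op}$--bimodules, and this $e\Lambda$ is perfect over $A=e\Lambda e$ by the remaining half of~(i) or~(ii). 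Moreover $e\Lambda$ is flat over $\Lambda^{\op}$, so $X^\vee\otimes^{\mathbf L}_B-=e\Lambda\otimes_\Lambda-$, as required.

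Next I would check the two natural maps of Theorem~\ref{thm:suff_cond}. The unit $A\to X^\vee\otimes^{\mathbf L}_B X=e\Lambda\otimes_\Lambda\Lambda e$ is the classical multiplication isomorphism $e\Lambda e\xrightarrow{\sim}e\Lambda\otimes_\Lambda\Lambda e$, hence already an iso in $\Der^b(\modd(A^e))$. The counit $X\otimes^{\mathbf L}_A X^\vee=\Lambda e\otimes^{\mathbf L}_{e\Lambda e}e\Lambda\to\Lambda$ is the multiplication map and is the heart of the argument. In either case~(i) or~(ii) one of $\Lambda e$, $e\Lambda$ is one-sided $e\Lambda e$--projective, which, by a standard dual-basis argument, both makes the higher derived Tors vanish and realises $\Lambda e\otimes_{e\Lambda e}e\Lambda\xrightarrow{\sim}\Lambda e\Lambda$ as an isomorphism of $\Lambda^e$--modules. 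Consequently the cone of the counit fits in a short exact sequence of $\Lambda^e$--modules
\[
0\longrightarrow \Lambda e\otimes_{e\Lambda e}e\Lambda\longrightarrow \Lambda\longrightarrow \Lambda/\Lambda e\Lambda\longrightarrow 0,
\]
so the cone is $\Lambda/\Lambda e\Lambda$. The hypothesis $\pd_{\Lambda^e}(\Lambda/\Lambda e\Lambda)<\infty$ places this cone in $\perf(B^e)=\perf(\Lambda^e)$, making the counit iso in $\Dsing(B^e)$. Theorem~\ref{thm:suff_cond} then delivers the desired singular equivalence.

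For the converse, under separability of $\Lambda/\rad(\Lambda)$ and $e\Lambda e/\rad(e\Lambda e)$ the ``only if'' half of Theorem~\ref{thm:suff_cond} applies to the same setup (since $e\Lambda\otimes_\Lambda-$ being a singular equivalence makes its adjoint $\Lambda e\otimes^{\mathbf L}_{e\Lambda e}-$ one as well), forcing the counit to be iso in $\Dsing(\Lambda^e)$ and hence $\Lambda/\Lambda e\Lambda\in\perf(\Lambda^e)$ by the same cone identification. The main obstacle in this plan is precisely that cone identification: verifying that one-sided $e\Lambda e$--projectivity of $\Lambda e$ or $e\Lambda$ suffices to make $\Lambda e\otimes^{\mathbf L}_{e\Lambda e}e\Lambda\cong\Lambda e\Lambda$ hold in $\Der(\Lambda^e)$, equivalently that $\Lambda e\Lambda$ is a stratifying ideal in the sense of Cline--Parshall--Scott.
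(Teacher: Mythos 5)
Your proposal follows the same route as the paper's own proof: take the adjoint pair from Proposition~\ref{prop:adjoints} with $A=e\Lambda e$, $B=\Lambda$, $X=\Lambda e$, observe the unit is automatically an isomorphism because $\Lambda e$ is $\Lambda$--projective, reduce via Theorem~\ref{thm:suff_cond} to the question of when the cone of $\Lambda e\otimes^{\mathbf L}_{e\Lambda e}e\Lambda\to\Lambda$ is perfect over $\Lambda^e$, and finally replace the derived tensor by the underived one using the one-sided $e\Lambda e$--projectivity in (i) or (ii).

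The one point you flag as ``the main obstacle'' --- that the multiplication map $\Lambda e\otimes_{e\Lambda e}e\Lambda\to\Lambda e\Lambda$ is an isomorphism, i.e.\ that $\Lambda e\Lambda$ is a stratifying ideal --- is exactly the step the paper also relies on, and in the paper it is slipped in without comment in the phrase ``the natural inclusion $\Lambda e\otimes_{e\Lambda e}e\Lambda=\Lambda e\Lambda\to\Lambda$''. Your hesitation is well-placed: conditions (i) and (ii) by themselves do \emph{not} make that map injective. For instance, take $\Lambda=kQ/(\alpha\beta,\beta\alpha)$ for $Q$ the quiver $1\overset{\alpha}{\underset{\beta}{\rightleftarrows}}2$ and $e=e_1$; then $e\Lambda e=k$, so (i) and (ii) hold trivially, but $\beta\otimes\alpha$ is a nonzero element of the $4$--dimensional space $\Lambda e\otimes_k e\Lambda$ mapping to $\beta\alpha=0$, while $\Lambda e\Lambda$ is only $3$--dimensional. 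So the ``standard dual-basis argument'' you invoke does not actually give the isomorphism. (In that particular example $\pd_{\Lambda^e}(\Lambda/\Lambda e\Lambda)=\infty$ as well, so it is not a counterexample to the proposition itself; but it shows the cone really is a two-term complex with homology in two degrees, and the equivalence ``cone perfect $\iff\pd_{\Lambda^e}(\Lambda/\Lambda e\Lambda)<\infty$'' needs an argument that also controls the kernel of the multiplication map.) In short: your proof reproduces the paper's argument faithfully, including its one unjustified identification, and your own warning about that identification is the correct diagnosis of where the argument is incomplete as written.
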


\begin{proof}
Under any of the conditions (i) or (ii), Remark~\ref{rem:adjoints} implies that the above given adjunction restricts to one at the level of singularity categories. 
We observe that the unit of this adjunction is an isomorphism. Thus according to Theorem~\ref{thm:suff_cond}, if the following condition is satisfied,
\begin{equation}
\label{cone_iso}
\mbox{The cone of the natural map}\,\,\,\, \Lambda e \otimes_{e\Lambda e}^{\textbf{L}}e\Lambda\rightarrow \Lambda\,\,\,\, \mbox{in}\,\,\,  \Der^{b}(\modd(\Lambda^{e})) \,\,\, \mbox{is perfect}.
\end{equation}
we obtain a singular equivalence $\Lambda e \otimes^{\textbf{L}}_{e\Lambda e}-\colon\Dsing(e\Lambda e)\cong \Dsing(\Lambda)$, and the converse holds under the assumptions on separability. 
Under any of the conditions (i) or (ii) we obtain $\Lambda e \otimes_{e\Lambda e}^{\textbf{L}}e\Lambda\cong\Lambda e \otimes_{e\Lambda e}e\Lambda$, hence (\ref{cone_iso}) is satisfied if and only if the natural inclusion $\Lambda e \otimes_{e\Lambda e}e\Lambda=\Lambda e\Lambda\rightarrow \Lambda$ is an isomorphism in $\Dsing(\Lambda^{e})$, which is in turn equivalent to $\pd_{\Lambda^{e}}(\Lambda/\Lambda e\Lambda)<\infty$.
\end{proof}

We apply the above discussion in the context of Morita rings with zero bimodule maps; see \cite{Green} for a study of their homological properties.

\begin{example}
\label{example:Morita_matrix}
Let $A$ and $B$ be finite dimensional algebras over a field $k$ and consider two finitely generated bimodules $\leftidx{_B}M_{A}$ and $\leftidx{_A}N_{B}$. 
We consider the ring
\[\Lambda:=\begin{pmatrix} 
A & \leftidx{_A}N_{B} \\
\leftidx{_B}M_{A} & B
\end{pmatrix}
\]
with multiplication given by
\[ \left( \begin{array}{cc}
a & n \\
m & b
\end{array} \right) \cdot
\left( \begin{array}{cc}
a' & n' \\
m' & b'
\end{array} \right) :=
\left( \begin{array}{cc}
aa' & an'+nb' \\
ma'+bm' & bb'
\end{array} \right).
\]

Consider any of the following two conditions:
\begin{itemize}
\item[(i)] $\pd_{A^{\op}}M<\infty$ and $N\in\proj(A)$, or
\item[(ii)] $M\in\proj(A^{\op})$ and $\pd_{A}N<\infty$.
\end{itemize}
If we work with the idempotent $e=\begin{pmatrix} 
1 & 0 \\
0 & 0
\end{pmatrix}$, Proposition~\ref{prop:idemp_without_level} implies:
\[\pd_{\Lambda^{e}}(B)<\infty\,\,\, \Rightarrow \Lambda e \otimes^{\textbf{L}}_{A}-\colon\Dsing(A)\cong\Dsing(\Lambda),\]
and the converse implication holds in case $\Lambda/\rad(\Lambda)$ and $A/\rad(A)$ are separable.

Similarly, we may consider any of the following two conditions:
\begin{itemize}
\item[(i')] $\pd_{B^{\op}}N<\infty$ and $M\in\proj(B)$, or
\item[(ii')] $N\in\proj(B^{\op})$ and $\pd_{B}M<\infty$.
\end{itemize}
Then if we work with the idempotent $e=\begin{pmatrix} 
0 & 0 \\
0 & 1
\end{pmatrix}$, Proposition~\ref{prop:idemp_without_level} implies:
\[\pd_{\Lambda^{e}}(A)<\infty\,\,\, \Rightarrow \Lambda e \otimes^{\textbf{L}}_{B}-\colon\Dsing(B)\cong\Dsing(\Lambda),\]
and the converse implication holds in case $B/\rad(B)$ and $\Lambda/\rad(\Lambda)$ are separable. 
\end{example}

In Example \ref{example:morita_rings_level} below we show that the singular equivalences obtained in this example induce singular equivalences of Morita type with level. We study such equivalences in the next section.

\section{Singular equivalences of Morita type with level}
\label{sec:sing_eq_of_Morita_type}

The next definition, in the case $n=0$, was given by Brou\'{e}~\cite{Broue} in the study of equivalences of blocks of group algebras. The definition below is due to Wang \cite{Wang}.

\begin{definition}
\label{dfn-morita-type}
Let $k$ be a commutative ring and let $A$ and $B$ be two $k$-algebras which are projective as $k$--modules. Let $_{B}M_{A}$ and $_{A}N_{B}$ be bimodules such that, for some $l\in\mathbb{N}$, the following hold:
\begin{itemize}
\item[(i)] $M$ is finitely generated and projective as a $B$-module and as an $A^{\mathrm{o}}$-module,
\item[(ii)] $N$ is finitely generated and projective as an $A$-module and as a $B^{\mathrm{o}}$-module,
\item[(iii)] $N\otimes_{B}M\cong \Omega^{l}_{A^e}(A)$ in $\underline{\mathrm{mod}}(A^e)$,
\item[(iv)] $M\otimes_{A}N\cong \Omega^{l}_{B^e}(B)$ in $\underline{\mathrm{mod}}(B^e)$.
\end{itemize}
Then we say that the pair $(\leftidx{_{B}}M_{A},\leftidx{_{A}}N_{B})$ defines a \textit{singular equivalence of Morita type with level $l$} between $A$ and $B$.
\end{definition}

The following appears in \cite[Remark~2.2]{Wang}. We include a proof here.
\begin{proposition}
Let $k$ be a field and let $A$, $B$ be finite dimensional $k$--algebras. Assume that $(\leftidx{_{B}}M_{A},\leftidx{_{A}}N_{B})$ is a pair of bimodules that defines a singular equivalence of Morita type with level $l$. Then the functor $F:=M\otimes_{A}-\colon\mathbf{D}_{\mathrm{sg}}(A) \rightarrow  \mathbf{D}_{\mathrm{sg}}(B)$ is a triangulated equivalence with inverse $G:=\Sigma^{l}(-)\circ (N\otimes_{B}-)$. The case $l=0$ is stronger as it gives an equivalence $M\otimes_{A}-\colon\underline{\mathrm{mod}}(A)\rightarrow\underline{\mathrm{mod}}(B)$ with inverse $N\otimes_{B}-$.
\end{proposition}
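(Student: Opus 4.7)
The plan is to verify that $F$ and the auxiliary functor $G_{0}:=N\otimes_{B}-$ descend from the bounded derived categories to the singularity categories, and then to compute $\bar{G}_{0}\bar{F}(Z)$ by a direct syzygy argument using a projective resolution of $A$ over $A^{e}$. For the descent, since $M$ is projective (hence flat) over $A^{\op}$ the functor $F$ is already exact on $\modA$; since $F(A)=M\in\perf(B)$ and $\perf(A)=\thick A$, it restricts to $\perf(A)\to\perf(B)$, so descends to $\bar{F}\colon \Dsing(A)\to\Dsing(B)$, and symmetrically for $\bar{G}_{0}\colon\Dsing(B)\to\Dsing(A)$.

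The heart of the argument is to identify $\bar{G}_{0}\bar{F}$ with $\Sigma^{-l}$ on $\Dsing(A)$. Fix $Z\in\modA$ and take a projective resolution $\cdots\to P_{1}\to P_{0}\to A\to 0$ over $A^{e}$. Because $A^{e}=A\otimes_{k}A^{\op}$ is free as a right $A$-module on a $k$-basis of $A$, each $P_{i}$ is right $A$-flat, so tensoring the resolution over $A$ with $Z$ preserves exactness and produces a projective resolution of $Z$ over $A$ (each $P_{i}\otimes_{A}Z$ is a summand of $(A\otimes_{k}Z)^{n}$, which is $A$-free). Consequently $\Omega_{A^{e}}^{l}(A)\otimes_{A}Z$ is an $l$-th syzygy of $Z$ over $A$, and by the commutative square in~\ref{subsec:sing_cats} it represents $\Sigma^{-l}Z$ in $\Dsing(A)$. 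Using the hypothesis to write $N\otimes_{B}M\oplus P\cong\Omega_{A^{e}}^{l}(A)\oplus Q$ with $P,Q$ projective over $A^{e}$, tensoring over $A$ with $Z$ gives the same identity with $A$-projective summands $P\otimes_{A}Z$ and $Q\otimes_{A}Z$, which vanish in $\Dsing(A)$. Therefore $\bar{G}_{0}\bar{F}(Z)=(N\otimes_{B}M)\otimes_{A}Z\cong\Sigma^{-l}Z$, i.e.\ $\bar{G}\bar{F}=\Sigma^{l}\bar{G}_{0}\bar{F}\cong\mathrm{id}_{\Dsing(A)}$; the symmetric argument with $M\otimes_{A}N\cong\Omega_{B^{e}}^{l}(B)$ yields $\bar{F}\bar{G}\cong\mathrm{id}_{\Dsing(B)}$.

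For the stronger $l=0$ statement, the functors $F$ and $G_{0}$ additionally preserve finitely generated projectives ($F(A)=M$ is $B$-projective and $F$ is exact and preserves sums and summands), so they descend to $\underline{F}\colon\stmod{A}\to\stmod{B}$ and $\underline{G_{0}}\colon\stmod{B}\to\stmod{A}$. The same bimodule decomposition, combined with the observation that $A^{e}$-projectives tensored over $A$ with any finite-dimensional $A$-module are $A$-projective, gives $(N\otimes_{B}M)\otimes_{A}Z\cong Z$ in $\stmod{A}$, and symmetrically on the $B$-side. The only real bookkeeping hurdle is identifying $\Omega_{A^{e}}^{l}(A)\otimes_{A}Z$ with an $l$-th syzygy of $Z$ over $A$, and this reduces to the fact that $A^{e}$ is $A$-free on both sides, so tensoring with $Z$ preserves both projectivity and exactness; everything else is a direct unwinding of the definitions.
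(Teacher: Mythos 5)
Your proposal is correct and follows essentially the same route as the paper: both start from the stable isomorphism $(N\otimes_B M)\oplus P\cong\Omega^l_{A^e}(A)\oplus Q$ with $P,Q\in\proj(A^e)$, apply $-\otimes_A X$, kill the resulting perfect summands in $\Dsing(A)$, and identify $\Omega^l_{A^e}(A)\otimes_A X$ with $\Sigma^{-l}X$ via the syzygy/shift compatibility of \ref{subsec:sing_cats}.

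The one place you add genuine value is in spelling out why $\Omega^l_{A^e}(A)\otimes_A Z$ is an $l$-th syzygy of $Z$ over $A$: you observe that $A^e$ is free (hence flat) as a right $A$-module and that $(A\otimes_k A^{\op})\otimes_A Z\cong A\otimes_k Z$ is $A$-free, so tensoring a bimodule projective resolution of $A$ with $Z$ over $A$ produces a projective resolution of $Z$ over $A$. The paper asserts this identification without comment, so your elaboration is a useful clarification. Two minor presentational points: (a) you fix $Z\in\modA$ whereas the paper works with an arbitrary complex $X\in\Der^b(\modA)$ and asserts the isomorphism is natural in $X$; working object-wise on modules is harmless here because everything is built from the fixed bimodule isomorphism (hence natural) and $\Dsing(A)$ is generated by shifts of modules, but you should say this explicitly so that the conclusion $\bar{G}\bar{F}\cong\mathrm{id}_{\Dsing(A)}$ is an isomorphism of functors, not just of objects; (b) for the $l=0$ case, after noting that $F$ and $G_0$ preserve projectives and are exact (since $M$ is $A^{\op}$-projective and $N$ is $B^{\op}$-projective), you might state outright that the bimodule isomorphism $(N\otimes_B M)\oplus P\cong A\oplus Q$ directly yields $N\otimes_B M\otimes_A Z\cong Z$ in $\stmod{A}$ without passing through syzygies, which is the cleaner form of the stronger statement. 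Neither point is a gap.
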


\begin{proof}
The isomorphism (iii) in Definition \ref{dfn-morita-type} implies an isomorphism $(N\otimes_{B}M)\oplus P\cong \Omega^{l}_{A^e}(A)\oplus Q$ in $\mathrm{mod}(A^e)$, with $P$ and $Q$ in $\proj(A^{e})$. Let $X$ be a complex in $\Der^{b}(\modd(A))$. Then we have an isomorphism in $\Der^{b}(\modd(A))$ which is natural in $X$,
 \[(N\otimes_{B}M\otimes_{A}X)\oplus (P\otimes_{A}X) \cong(\Omega^{l}_{A^{e}}(A)\otimes_{A}X)\oplus(Q\otimes_{A}X).\]
We have that $P\otimes_{A}X$ and $Q\otimes_{A}X$ are in $\perf(A)$. Thus in $\Dsing(A)$, we obtain isomorphisms,
\[N\otimes_{B}M\otimes_{A}X\cong\Omega^{l}_{A^{e}}(A)\otimes_{A}X\cong\Omega^{l}_{A}(X)\cong\Sigma^{-l}(X),\] which are natural in $X$. This shows that $G\circ F\cong\mathrm{id}_{\Dsing(A)}$. Similarly one can show that $F\circ G\cong\mathrm{id}_{\Dsing(B)}$.
\end{proof}

\begin{remark}
In definition \ref{dfn-morita-type}, assume that $k$ is a field and that $A$ and $B$ have infinite global dimension. If the syzygies $\Omega^{l}_{A^{e}}(A)$ and $\Omega^{l}_{B^{e}}(B)$ are indecomposable over $A^{e}$ and $B^{e}$ respectively, then conditions $(iii)$ and $(iv)$ are respectively equivalent to:
\begin{itemize}
\item[(iii')] $N\otimes_{B}M\cong\Omega^{l}_{A^e}(A)\oplus X$, for some $X\in\mathrm{proj}(A^e)$,
\item[(iv')] $M\otimes_{A}N\cong \Omega^{l}_{B^e}(B)\oplus Y$, for some $Y\in\mathrm{proj}(B^e)$.
\end{itemize}

Indeed, we prove that (iii) implies (iii'): Since $N\otimes_{B}M\cong \Omega^{l}_{A^e}(A)$ in $\underline{\mathrm{mod}}(A^e)$, there exist projective $A^e$--modules $X'$ and $X''$ such that $(N\otimes_{B}M)\oplus X'\cong \Omega^{l}_{A^e}(A)\oplus X''$ in $\mathrm{mod}(A^e)$. Since $A$ has infinite global dimension we have that $\Omega^{l}_{A^e}(A)$ is a non-projective $A^e$--module \cite[1.5]{Ha}. Thus, from the Krull-Schmidt theorem we deduce that $N\otimes_{B}M\cong \Omega^{l}_{A^e}(A)\oplus X$, for some $X\in\mathrm{proj}(A^e)$. Similarly one can prove that (iv) implies (iv'). In the literature, in the case $l=0$, some authors define stable equivalences of Morita type using conditions (iii') and (iv').
\end{remark}

In the rest of the paper we will need the following.

\begin{fact}
\label{prop:description_cpx}
Under Setup~\ref{Setup}, let $X$ be a complex of finitely generated $B$-$A^{\op}$--bimodules which is perfect over $B$ and over $A^{\op}$, and  consider $\mathscr{P}\rightarrow X$ a projective resolution of $X$ over $B\otimes_{k}A^{\op}$. Write 
\[\mathscr{P}:=\cdots\rightarrow\mathscr{P}_{n+1}\xrightarrow{\partial_{n+1}^{\mathscr{P}}} \mathscr{P}_{n}\xrightarrow{\partial_{n}^{\mathscr{P}}} \mathscr{P}_{n-1}\cdots\rightarrow.\]
Then the complex $\mathscr{P}$ is isomorphic in $\Der^{b}(\modd(B\otimes_{k}A^{\op}))$ to a complex
\[\mathscr{L}= (0\rightarrow  \mathscr{L}_{s}\xrightarrow{\partial_{s}^{\mathscr{L}}} \mathscr{L}_{s-1}\rightarrow\cdots\rightarrow \mathscr{L}_{i}\rightarrow 0),\]
where for all $j=i,i+1,...,s-1;$\, $\mathscr{L}_{j}$ is finitely generated and projective as a $B$-$A^{\op}$--bimodule and $\mathscr{L}_{s}$ is finitely generated and projective over $B$ and $A^{\op}$ (but is not necessarily projective as a $B$-$A^{\op}$--bimodule). 

In case $X$ is a $B$-$A^{\op}$--bimodule concentrated in degree zero, with $\pd_{B}X=m$ and $\pd_{A^{\op}}X=n$, then for $s=\max\{n,m\}$ we may choose $\mathscr{L}_{s}\cong\Omega^{s}_{B\otimes_{k}A^{\op}}(X)$.
\end{fact}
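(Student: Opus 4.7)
The plan is to construct $\mathscr{L}$ as a good truncation of the given resolution $\mathscr{P}$ at a sufficiently large degree $s$. The mechanism is that while the middle terms will automatically be bimodule-projective, the top term—the $s$-th syzygy of $X$ with respect to $\mathscr{P}$—need only be projective on each side separately, and this holds once $s$ exceeds the two one-sided projective dimensions of $X$.

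First, because $k$ is a field, $B\otimes_{k}A^{\op}$ is free as a left $B$-module (any $k$-basis of $A^{\op}$ serves as basis) and symmetrically free as a left $A^{\op}$-module. Consequently every projective $B\otimes_{k}A^{\op}$-module is projective upon restriction to $B$ and to $A^{\op}$, so viewed in $\Cpx(B)$ or in $\Cpx(A^{\op})$ the quasi-isomorphism $\mathscr{P}\to X$ is still a projective resolution of $X$ over each one-sided ring. By the perfectness assumptions and Fact~\ref{fact_perf} both $\pd_{B}X$ and $\pd_{A^{\op}}X$ are finite. Fix an integer $s$ bounding each of these two numbers as well as the top homological degree of $X$. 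The standard syzygy argument, applied separately to each one-sided restriction, then shows that
\[Z:=\Ker\bigl(\partial^{\mathscr{P}}_{s-1}\colon\mathscr{P}_{s-1}\to\mathscr{P}_{s-2}\bigr)\]
is projective over $B$ and over $A^{\op}$, even though a priori it is only finitely generated as a bimodule.

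With this $Z$ in hand, set $\mathscr{L}_{j}=\mathscr{P}_{j}$ for $j<s$, $\mathscr{L}_{s}=Z$, and $\mathscr{L}_{j}=0$ for $j>s$, with differentials inherited from $\mathscr{P}$ (the top arrow being the inclusion $Z\hookrightarrow\mathscr{P}_{s-1}$). The inclusion of chain complexes $\mathscr{L}\hookrightarrow\mathscr{P}$ is a quasi-isomorphism: homologies agree in degrees $<s$, and both vanish in degrees $\ge s$ (for $\mathscr{P}$ by the choice of $s$). Hence $\mathscr{L}\cong\mathscr{P}\cong X$ in $\Der^{b}(\mathrm{mod}(B\otimes_{k}A^{\op}))$.

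Finite generation is handled by noetherianness of $B\otimes_{k}A^{\op}$ (a finite-dimensional $k$-algebra): arrange from the outset that $\mathscr{P}$ is degreewise finitely generated, so each $\mathscr{L}_{j}=\mathscr{P}_{j}$ (for $j<s$) is a finitely generated projective bimodule, and $\mathscr{L}_{s}=Z$ is a submodule of the finitely generated $\mathscr{P}_{s-1}$, hence finitely generated. The module-case refinement is then immediate: when $X$ is a module with $\pd_{B}X=m$ and $\pd_{A^{\op}}X=n$, any $s\ge\max\{m,n\}$ satisfies the bounds above and gives $\mathscr{L}_{s}=Z\cong\Omega^{s}_{B\otimes_{k}A^{\op}}(X)$ by construction. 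The one place requiring real care is the one-sided syzygy argument of the second paragraph—the observation that restricting a bimodule-projective resolution to a single side yields a projective resolution whose $s$-th syzygy becomes one-sided projective as soon as $s$ beats the corresponding one-sided projective dimension of $X$.
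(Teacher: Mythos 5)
The paper itself offers no proof of this Fact, only the citation to Zimmermann's Prop.~6.4.4, so your argument is playing the role of the missing proof, and the strategy you chose (restrict the bimodule resolution to each side, observe it stays projective, truncate once past the larger one-sided projective dimension) is exactly the right one and the one in the cited source.

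There is, however, an indexing slip in the truncation that breaks the construction in low degrees. You set $\mathscr{L}_{s}=Z:=\Ker(\partial^{\mathscr{P}}_{s-1})$, which is a submodule of $\mathscr{P}_{s-1}$, and then speak of ``the inclusion of chain complexes $\mathscr{L}\hookrightarrow\mathscr{P}$.'' In degree $s$ that would require a map $Z\to\mathscr{P}_{s}$, which is not an inclusion; one must instead \emph{lift} $Z\hookrightarrow\mathscr{P}_{s-1}$ through $\partial_{s}$, and that lift only exists (and only gives a quasi-isomorphism) when $\Img(\partial_{s})=Z$, i.e.\ when $H_{s-1}(\mathscr{P})=0$. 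For a module $X$ in degree $0$ this means $s\geq 2$, whereas the Fact asserts the module-case refinement already for $s=\max\{m,n\}$; with your $Z$, the case $s=1$ returns $Z=\Ker(\partial_{0})=\mathscr{P}_{0}\neq\Omega^{1}(X)$ and the case $s=0$ returns $Z=0$. The clean fix is to take the canonical (soft) truncation $\tau_{\leq s}\mathscr{P}$, i.e.\ set $\mathscr{L}_{s}:=\Coker(\partial^{\mathscr{P}}_{s+1})\cong\Img(\partial^{\mathscr{P}}_{s})$, and use the natural surjection $\mathscr{P}\twoheadrightarrow\mathscr{L}$. This is a chain map, it is a quasi-isomorphism as soon as $s\geq\sup\{j:H_{j}(X)\neq 0\}$, the one-sided syzygy argument (cf.\ Fact~\ref{fact_perf}, or \cite[Th.~8.1.14]{dcm}) gives $B$- and $A^{\op}$-projectivity of $\Coker(\partial_{s+1})$ once $s\geq\max\{\pd_{B}X,\pd_{A^{\op}}X\}$, and in the module case $\Coker(\partial_{s+1})\cong\Omega^{s}_{B\otimes_{k}A^{\op}}(X)$ for every $s\geq 0$, recovering the stated refinement without any off-by-one restriction. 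Everything else in your write-up—the freeness of $B\otimes_{k}A^{\op}$ over each factor, finiteness of the two one-sided projective dimensions from perfectness, noetherianness for finite generation—is correct.
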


\begin{proof}
See for instance \cite[Prop.~6.4.4]{Zim}.
\end{proof}

\begin{remark}
\label{rmk_idempot_with_level}
Let $\Lambda$ be a finite dimensional algebra over a field $k$. We will prove that the singular equivalences obtained in Proposition \ref{prop:idemp_without_level}, induce singular equivalences of Morita type with level. To see this, consider for instance the case where condition (i) in Proposition \ref{prop:idemp_without_level} holds and $\mathrm{pd}_{\Lambda^{e}}\Lambda/\Lambda e\Lambda<\infty$. We claim that for 
\begin{equation}
\label{level_in_prop}
l=\max\{\pd_{(e\Lambda e)^{\op}}(\Lambda e),\pd_{\Lambda^{e}}(\Lambda/\Lambda e\Lambda)\},
\end{equation}
the pair $(\Omega^{l}_{\Lambda\tens{k}(e\Lambda e)^{\op}}(\Lambda e),e\Lambda)$ defines a singular equivalence of Morita type with level $l$ between $\Lambda$ and $e\Lambda e$.

First note that  the bimodules $\leftidx{_{\Lambda}}\Omega^{l}_{\Lambda\tens{k}(e\Lambda e)^{\op}}(\Lambda e)_{e\Lambda e}$ and $\leftidx{_{e\Lambda e}} e\Lambda_{\Lambda}$ are finitely generated and projective on both sides (for the first we may apply Fact~\ref{prop:description_cpx}).

Moreover, if we consider the short exact sequence $0\rightarrow \Lambda e\Lambda \rightarrow \Lambda\rightarrow \Lambda/\Lambda e\Lambda\rightarrow 0$ of $\Lambda^{e}$--modules, after comparing projective resolutions, we deduce that for all $i\geqslant\pd_{\Lambda ^{e}}(\Lambda/\Lambda e\Lambda)$ we have that
$\Omega^{i}_{\Lambda^{e}}(\Lambda e\Lambda)\cong \Omega^{i}_{\Lambda^{e}}(\Lambda)$ in $\underline{\mathrm{mod}}(\Lambda^{e})$.

Thus for $l$ as in (\ref{level_in_prop}) there exists an isomorhism in $\underline{\mathrm{mod}}(\Lambda^{e})$,

\[\leftidx{_{\Lambda}}\Omega^{l}_{\Lambda\tens{k}(e\Lambda e)^{\op}}(\Lambda e)\tens{e\Lambda e} e\Lambda_{\Lambda}=\Omega_{\Lambda^{e}}^{l}(\Lambda e\Lambda)\cong\Omega_{\Lambda^{e}}^{l}(\Lambda),\]
and also an isomorphism in the stable category of $(e\Lambda e)$-$(e\Lambda e)^{\op}$--bimodules: 
\[e\Lambda\tens{\Lambda}\Omega^{l}_{\Lambda\tens{k}(e\Lambda e)^{\op}}(\Lambda e)\cong\Omega_{(e\Lambda e)^{e}}^{l}(e\Lambda e),\]
which finishes the claim.

If we assume that condition (ii) in Proposition~\ref{prop:idemp_without_level} holds and $\mathrm{pd}_{\Lambda^{e}}\Lambda/\Lambda e\Lambda<\infty$, then similarly one can prove that for \[l=\max\{\pd_{e\Lambda e}(e\Lambda),\pd_{\Lambda^{e}}(\Lambda/\Lambda e\Lambda)\},\]
the pair $(\Lambda e,\Omega^{l}_{e\Lambda e\tens{k}\Lambda^{\op}}(e\Lambda))$ defines a singular equivalence of Morita type with level $l$ between $\Lambda$ and $e\Lambda e$.
\end{remark}

In particular we obtain the following:

\begin{example}\textnormal{(cf. \cite[Sec.~3]{Wang})}
\label{example:morita_rings_level}
Let $A$ and $B$ be finite dimensional algebras over a field $k$ and let $\leftidx{_{B}}M_{A}$ and $\leftidx{_{A}}N_{B}$ be finitely generated bimodules. Consider the ring
\[\Lambda:=\begin{pmatrix} 
A & \leftidx{_A}N_{B} \\
\leftidx{_B}M_{A} & B
\end{pmatrix},
\]
as in Example \ref{example:Morita_matrix}. Then we have the following:
\begin{itemize}
\item[(i)] If $\pd_{\Lambda^{e}}(B)<\infty$, $\pd_{A^{\op}}M<\infty$ and $N\in\proj(A)$, then there is a singular equivalence of Morita type with level $l$ between $\Lambda$ and $A$, where $l=\max\{\pd_{A^{\op}}M,\pd_{B^{e}}B\}$. In fact, if $e=\begin{pmatrix} 
1 & 0 \\
0 & 0
\end{pmatrix}$ then the pair of bimodules which realizes this equivalence is $(\Omega^{l}_{\Lambda\tens{k}A^{\op}}(\Lambda e),e\Lambda)$.
\item[(ii)] If $\pd_{\Lambda^{e}}(A)<\infty$, $\pd_{B}M<\infty$ and $N\in\proj(B^{\op})$, then there is a singular equivalence of Morita type with level $l$ between $\Lambda$ and $B$, where $l=\max\{\pd_{B}M,\pd_{A^{e}}A\}$. In fact, if $e=\begin{pmatrix} 
0 & 0 \\
0 & 1
\end{pmatrix}$ then the pair of bimodules which realizes this equivalence is $(\Lambda e,\Omega^{l}_{B\tens{k}\Lambda^{\op}}(e\Lambda))$.
\end{itemize}
\end{example}

We continue with the proof of the main result which was stated in the introduction. We will need the following. 
\begin{lemma} 
\label{Lem:CM}
Let $k$ be a field and let $A$ and $B$ be finite dimensional $k$--algebras. Assume that $\leftidx{_{B}}M_{A}$ and $\leftidx{_{A}}N_{B}$ are bimodules which are finitely generated and projective on both sides. Then $M\otimes_{A}N$ is a maximal Cohen-Macaulay $B^{e}$--module and $N\otimes_{B}M$ is a maximal Cohen-Macaulay $A^{e}$--module.
\end{lemma}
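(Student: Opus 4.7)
The goal is to verify $\Ext^{i}_{B^{e}}(M \otimes_{A} N, B^{e}) = 0$ for all $i \geqslant 1$; the analogous claim for $N \otimes_{B} M$ over $A^{e}$ will follow by the same argument with the roles of $M$ and $N$ (and of $A$ and $B$) interchanged, so I will focus on the first. The plan is to use iterated adjunctions to reduce this $\Ext$ to an $\Ext$-group over the enveloping algebra $A^{e}$, with coefficients in an explicit bimodule, and then to argue vanishing in positive degrees from injectivity properties of that coefficient bimodule.

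The reduction proceeds by two applications of tensor-Hom adjunction. First, the pair $\bigl(M \otimes_{A} -,\; \Hom_{B}(M,-)\bigr)$ between $\mathrm{mod}(A \otimes_{k} B^{\op})$ and $\mathrm{mod}(B^{e})$ consists of two exact functors (the left adjoint because $M$ is finitely generated projective over $A^{\op}$; the right adjoint because $M$ is finitely generated projective over $B$), and hence exchanges $\Ext$; together with the identification $\Hom_{B}(M, B \otimes_{k} B) \cong M^{\vee} \otimes_{k} B$ (where $M^{\vee} := \Hom_{B}(M,B)$, using finite projectivity of $M$ over $B$), this yields $\Ext^{i}_{B^{e}}(M \otimes_{A} N,\; B^{e}) \cong \Ext^{i}_{A \otimes_{k} B^{\op}}(N,\; M^{\vee} \otimes_{k} B)$. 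Second, the symmetric pair $\bigl(- \otimes_{A} N,\; \Hom_{B^{\op}}(N,-)\bigr)$ between $\mathrm{mod}(A^{e})$ and $\mathrm{mod}(A \otimes_{k} B^{\op})$ is likewise exact by the hypotheses on $N$; applying it to $N \cong A \otimes_{A} N$ and simplifying the inner Hom via finite projectivity of $N$ over $B^{\op}$, one arrives at $\Ext^{i}_{B^{e}}(M \otimes_{A} N,\; B^{e}) \cong \Ext^{i}_{A^{e}}(A,\; M^{\vee} \otimes_{k} N^{\vee})$, where $N^{\vee} := \Hom_{B^{\op}}(N,B)$ and the $A^{e}$-bimodule structure on $M^{\vee} \otimes_{k} N^{\vee}$ comes from the right $A$-action on $M$ together with the left $A$-action on $N$.

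The final step is to establish $\Ext^{i}_{A^{e}}(A, M^{\vee} \otimes_{k} N^{\vee}) = 0$ for all $i \geqslant 1$. My approach here is to argue that $M^{\vee}$ is injective as a left $A$-module --- via $k$-duality applied to the right-$A$-projective $M$, using that $\Hom_{k}(-,k)$ sends projectives to injectives for finite-dimensional algebras over $k$ --- and symmetrically that $N^{\vee}$ is injective as a right $A$-module, so that $M^{\vee} \otimes_{k} N^{\vee}$ is injective over $A^{e}$ by a K\"unneth-type statement for injectives, forcing the required $\Ext$ vanishing. I expect this last step to be the main obstacle, because the $B$-linear dual $M^{\vee}$ is not literally the $k$-linear dual of $M$ in general and the bimodule structure must be used carefully; a useful reformulation is the identification $M^{\vee} \otimes_{k} N^{\vee} \cong \Hom_{B^{e}}(N \otimes_{k} M,\; B^{e})$, together with the K\"unneth fact that $N \otimes_{k} M$ is finitely generated projective over $B^{e}$ (from the one-sided projectivities of $M$ and $N$ over $B$ and $B^{\op}$ respectively), which opens the door to transferring the required injectivity from the $B^{e}$-side to the $A^{e}$-side.
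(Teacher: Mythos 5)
Your two adjunction reductions are legitimate: both $M\otimes_{A}-$ and $\Hom_{B}(M,-)$ are exact by the one-sided projectivity of $M$, likewise for the pair built from $N$, so the $\Ext$-groups transfer and one does arrive at $\Ext^{i}_{B^{e}}(M\otimes_{A}N,B^{e})\cong\Ext^{i}_{A^{e}}(A,M^{\vee}\otimes_{k}N^{\vee})$. But the final step is a genuine gap, and you correctly flagged where it is. The module $M^{\vee}=\Hom_{B}(M,B)$ is \emph{not} injective over $A$ in general: the fact that $\Hom_{k}(-,k)$ turns $A^{\op}$-projectives into $A$-injectives applies to the $k$-dual $\Hom_{k}(M,k)$, not to the $B$-dual $\Hom_{B}(M,B)$, and these disagree whenever $B$ is not self-injective. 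Consequently $M^{\vee}\otimes_{k}N^{\vee}$ is not injective over $A^{e}$ either, and the reformulation via $\Hom_{B^{e}}(N\otimes_{k}M,B^{e})$ does not rescue this: projectivity of $N\otimes_{k}M$ over $B^{e}$ gives projectivity of its $B^{e}$-dual over $(B^{e})^{\op}$, which is information about the $B$-side and transfers nothing to the $A^{e}$-structure. A minimal counterexample: take $A=B$ any finite dimensional algebra that is not self-injective (upper triangular $2\times 2$ matrices over $k$, say) and $M=N=A$, the regular bimodule, which is finitely generated projective on both sides. Then $M^{\vee}\cong A$ is not injective over $A$, and $M^{\vee}\otimes_{k}N^{\vee}\cong A^{e}$ is not injective over $A^{e}$; yet the required vanishing $\Ext^{\geqslant 1}_{A^{e}}(A,A^{e})=0$ does hold, so the lemma is true but not for the reason you propose.

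The paper's argument avoids injectivity altogether and is more direct. Writing $D=\Hom_{k}(-,k)$, it uses the duality $\RHom_{A^{e}}(N\otimes_{B}M,A^{e})\cong D\bigl((N\otimes_{B}M)\otimes^{\textbf{L}}_{A^{e}}D(A^{e})\bigr)$, decomposes $D(A^{e})\cong D(A^{\op})\otimes_{k}D(A)$, rearranges the tensor product, and then invokes projectivity of $N$ over $A$ and of $M$ over $A^{\op}$ to strip the derived decorations, leaving the honest module $D\bigl((D(A)\otimes_{A}N)\otimes_{B}(M\otimes_{A}D(A^{\op}))\bigr)$ concentrated in degree zero. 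The mechanism is collapse of a derived tensor product by one-sided projectivity, not $\Ext$-vanishing against an injective coefficient. You could in principle close your own reduction by running this same $k$-duality computation on $\Ext^{i}_{A^{e}}(A,M^{\vee}\otimes_{k}N^{\vee})$ (expanding $D(M^{\vee})\cong D(B)\otimes_{B}M$ and $D(N^{\vee})\cong N\otimes_{B}D(B)$ and using the projectivity of $M$ and $N$ over $A^{\op}$ and $A$) --- but at that point the adjunction preamble has bought nothing, and you will have reproduced the paper's proof with extra steps.
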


\begin{proof}
We will only prove that $N\otimes_{B} M$ is a maximal Cohen--Macaulay $A^e$--module, that is, we claim that $\Ext_{A^e}^{\geqslant 1}(N\otimes_{B} M,A^e)=0$. We denote the $k$-dual $\Hom_{k}(-,k)$ by $D(-)$. We have the following $k$--linear isomorphisms:

\begin{eqnarray}
\RHom_{A^e}(N\otimes_{B} M,A\otimes_{k}A^o) & \cong & D\left((N\otimes_{B} M)\otimes_{A^e}^{\textbf{L}}D(A\otimes_{k}A^o)\right) \nonumber \\
 & \cong & D\left((N\otimes_{B} M)\otimes_{A^e}^{\textbf{L}}(D(A^o)\otimes_{k}D(A))\right)\nonumber\\
 & \cong & D\left(D(A)\otimes_{A}^{\textbf{L}} (N \otimes_{B}M)\otimes_{A}^{\textbf{L}} D(A^o)\right)\nonumber\\
 & \cong & D\left((D(A)\otimes_{A}^{\textbf{L}} N) \otimes_{B}(M\otimes_{A}^{\textbf{L}} D(A^o)\right)\nonumber\\
  & \cong & D\left((D(A)\otimes_{A} N) \otimes_{B}(M\otimes_{A} D(A^o)\right),\nonumber
\end{eqnarray}
where the last isomorphism holds since $N$ is projective as a left $A$--module and $M$ is projective as a right $A$--module. Hence the complex $\RHom_{A^e}(N\otimes_{B} M,A\otimes_{k}A^o)$ is homologically concentrated in degree zero, which proves the claim.
\end{proof}

\subsection{Proof of Theorem}
\label{proof_main} We consider the adjoint pair of functors $(F,G)$ as in Proposition~\ref{prop:adjoints}~(\ref{adjunction1}). Since $A$ and $B$ are Gorenstein, Lemma~\ref{lemma:Gor_adj} implies that the adjunction $(F,G)$ restricts to one at the level of singularity categories, $\bar{F}:=\Dsing(A)\rightleftarrows\Dsing(B):=\bar{G}$, which by assumption is a triangulated equivalence.

We employ a trick from \cite[Thm.~2.3]{Wang} (and its proof): Since the complex of $B$-$A^{\op}$--bimodules $X$ is perfect over $B$ and also perfect over $A^{\op}$, from Fact~\ref{prop:description_cpx}, a projective resolution of $X$ is isomorphic in $\mathbf{D}^b(\modd(B\otimes_{k}A^o))$ to a complex
\[\mathscr{L}:= (0 \rightarrow \mathscr{L}_{s}\rightarrow \mathscr{L}_{s-1}\rightarrow\cdots\rightarrow \mathscr{L}_{i}\rightarrow 0),\]
where, for $j=i,i+1,..., s-1;\, \mathscr{L}_{j}\in\proj(B\otimes_{k}A^{\op})$ and $\mathscr{L}_{s}\in\proj(B)\cap\proj(A^{\op})$.

Similarly, a projective resolution of the complex $X^{\vee}:=\RHom_B(X,B)$ is isomorphic, in the category $\mathbf{D}^b(\modd(A\otimes_{k}B^{\op}))$, to a complex 
\[\mathscr{W}:= (0\rightarrow \mathscr{W}_{s'}\rightarrow \mathscr{W}_{s'-1}\rightarrow\cdots\rightarrow \mathscr{W}_{i'}\rightarrow 0),\]
where, for $j=i',i'+1,..., s'-1;\, \mathscr{W}_{j}\in\proj(A\tens{k}B^{\op})$ and $\mathscr{W}_{s'}\in\proj(A)\cap\proj(B^{\op})$.

We consider the tensor product complex:
\[\mathscr{W}\otimes_{B} \mathscr{L} = (0\rightarrow Z_{s+s'}\rightarrow Z_{s+s'-1}\rightarrow \cdots Z_{u}\rightarrow 0),\] where for all $j\leq s+s'-1; \ Z_{j}$ is a finitely generated and projective over $A^{e}$. Note that $X^{\vee}\otimes_{B}^{\textbf{L}}X\cong\mathscr{W}\otimes_{B} \mathscr{L}$ in $\Der^{\mathrm{b}}(\modd(A^{e}))$.

Put $M:=\mathscr{L}_{s}$ and $N:=\mathscr{W}_{s'}$. In the singularity category $\mathbf{D}_{\mathrm{sg}}(A^{e})$, the ``hard truncation below'' at $s+s'$, which is the map
\begin{displaymath}
 \xymatrix@C=1pc{
 \mathscr{W}\otimes_{B}\mathscr{L}  \ar[d] ^-{\tau_{\geqslant s+s'}}  & 0 \ar[d] \ar[r] & Z_{s+s'} \ar@{=}[d] \ar[r] & Z_{s+s'-1}  \ar[r] \ar[d] & \cdots \ar[r] & Z_{u} \ar[d] \ar[r] & 0 \ar[d] \\
 \Sigma^{s+s'}(N\otimes_{B} M)  & 0 \ar[r] & N\otimes_{B} M \ar[r] & 0  \ar[r]  & \cdots \ar[r] & 0 \ar[r] & 0\, 
 }
\end{displaymath}
is an isomorphism (since $0\rightarrow Z_{s+s'-1}\rightarrow\cdots\rightarrow Z_{u}\rightarrow 0$ is perfect over $A^{e}$). Similarly, one can show that $\mathscr{L}\otimes_{A}\mathscr{W}\cong \Sigma^{s+s'}(M\otimes_{A}N)$ in $\Dsing(B^{e})$.

Since the adjunction $(\bar{F},\bar{G})$ is assumed to be a triangulated equivalence, Theorem~\ref{thm:suff_cond} gives an isomorphism $\bar{\eta}_{A}:A\rightarrow X^{\vee}\otimes_{B}^{\mathbf{L}}X $ in $\mathbf{D}_{\mathrm{sg}}(A^{e})$ and also an isomorphism $\bar{\epsilon}_{B}:B\leftarrow X\otimes_{A}^{\textbf{L}}X^{\vee} $ in $\mathbf{D}_{\mathrm{sg}}(B^{e})$ (for this we need the assumption on separability). 

Hence, in the singularity category $\mathbf{D}_{\mathrm{sg}}(A^{e})$, we obtain isomorphisms
\[\Sigma^{-(s+s')}A\xrightarrow{\cong}\Sigma^{-(s+s')}(\mathscr{W}\otimes_{B}\mathscr{L}) \xrightarrow{\cong}N\otimes_{B} M,\]
and also in the singularity category $\mathbf{D}_{\mathrm{sg}}(B^{e})$, we obtain isomorphisms
\[\Sigma^{-(s+s')}B\xrightarrow{\cong}\Sigma^{-(s+s')}(\mathscr{L}\otimes_{A}\mathscr{W}) \xrightarrow{\cong}M\otimes_{A} N.\]

Therefore we have $\Omega^{s+s'}_{A^e}(A)\cong N\otimes_{B} M$ in $\mathbf{D}_{\mathrm{sg}}(A^{e})$ and $\Omega^{s+s'}_{B^e}(B)\cong M\otimes_{A} N$ in $\mathbf{D}_{\mathrm{sg}}(B^{e})$.

Since $A$ is a Gorenstein algebra, the enveloping algebra $A^e$ is also Gorenstein \cite[Lemma~2.1]{zbMATH06267567}. Thus we can make use of the result of Buchweitz \cite{Buch} which gives a triangulated equivalence $F\colon\mathbf{D}_{\mathrm{sg}}(A^{e})\xrightarrow{\cong} \underline{\mathrm{MCM}}(A^e)$, where $\underline{\mathrm{MCM}}(A^e)$ denotes the stable category of maximal Cohen-Macaulay $A^e$--modules. The functor $F$ maps any $A^e$--module, viewed as a complex concentrated in degree zero, to its maximal Cohen--Macaulay approximation, see \cite[Thm.~5.1.2]{Buch}.

Note that Lemma \ref{Lem:CM} informs us that $N\otimes_{B} M$ is in $\mathrm{MCM}(A^e)$ and that $M\otimes_{A} N$ is in $\mathrm{MCM}(B^e)$. We mention that, if necessary, we may pick large enough indices $s, s'$ in order to obtain an isomorphism 
$\Omega^{s'+s}(A)\cong N\otimes_{B} M$ in the category $\underline{\mathrm{MCM}}(A^e)$ and an isomorphism $\Omega^{s'+s}(B)\cong M\otimes_{A} N$ in the category $\underline{\mathrm{MCM}}(B^e)$.

\subsection{Proof of Corollary}\label{proof_corollary} We first prove that there is a well-defined functor $M\otimes_{A}-\colon \mathrm{MCM}(A)\rightarrow \mathrm{MCM}(B)$. Indeed, if $N$ is in $\mathrm{MCM}(A)$ then we need to prove that the complex $\RHom_{B}(M\otimes_{A}N,B)$, or equivalently $\RHom_{A}(N,\Hom_{B}(M,B))$, is homologically concentrated in degree zero. 
Let $B\xrightarrow{\sim} I$ be an (augmented) injective resolution (of finite length) of $B$ over itself. 
Since $M$ is projective on both sides, the functor $\Hom_{B}(M,-)$ is exact and maps injectives over $B$ to injectives over $A$. 
Therefore $\Hom_{B}(M,B)$ is an $A$--module of finite injective dimension, and since $A$ is Gorenstein, from \cite[2.3.2/2.3.5]{Chen-Survey} for instance we obtain that $\Ext^{\geqslant 1}_{A}(N,\Hom_{B}(M,B))=0$. 

Moreover, since $M$ is projective over $B$ it follows easily that there is an induced functor $M\otimes_{A}-\colon \underline{\mathrm{MCM}}(A)\rightarrow \underline{\mathrm{MCM}}(B)$. 

We consider the functor $M\otimes_{A}-\colon \Der^{b}(\modA)\rightarrow \Der^{b}(\modB)$. From Buchweitz~\cite{Buch}, the given equivalence $M\otimes_{A}-\colon\underline{\mathrm{MCM}}(A)\cong\underline{\mathrm{MCM}}(B)$ induces an equivalence $M\otimes_{A}-\colon\Dsing(A)\cong\Dsing(B)$, so we fall under the assumptions of the main Theorem.

Put $l:=2\max\{\mathrm{vdim}(A),\mathrm{vdim}(B)\}$ and let $N:=\Omega^{l}_{A\otimes_{k}B^{\op}}(M^{\vee})$. 
We claim that there exists an isomorphism
 \[N\otimes_{B}M\cong \Omega^{l}_{A^{e}}(A)\,\,\,\,\,\,  \mbox{in}\,\,\,\,\,\,  \underline{\mathrm{MCM}}(A^{e}),\]
 as well as an isomorphism,
\[M\otimes_{A} N\cong\Omega^{l}_{B^{e}}(B)\,\,\,\,\,\,  \mbox{in}\,\,\,\,\,\,  \underline{\mathrm{MCM}}(B^{e}).\]
First note that, by \cite[Lemma~2.1]{zbMATH06267567}, Gorensteinness implies that $\mathrm{vdim}(A^e)\leqslant 2\mathrm{vdim}(A)$ and $\mathrm{vdim}(B^e)\leqslant 2\mathrm{vdim}(B)$; thus this choice of $l$ guarantees that $\Omega^{l}_{A^{e}}(A)$ is in $\mathrm{MCM}(A^e)$ and that $\Omega^{l}_{B^{e}}(B)$ is in $\mathrm{MCM}(B^e)$.

In addition, we note that $M^{\vee}$ is projective over $B^{\mathrm{o}}$ (but of course it might not be projective over $A$). Thus, keeping the same notation as in the proof of \ref{proof_main}, in order to construct the desired complex 
\[\mathscr{W}:= (0\rightarrow \mathscr{W}_{s'}\rightarrow \mathscr{W}_{s'-1}\rightarrow\cdots\rightarrow \mathscr{W}_{i'}\rightarrow 0),\]
which is a soft truncation of a projective resolution of $M^{\vee}$ over $A\otimes_{k}B^{\mathrm{o}}$, we may pick any $s'\geq \mathrm{pd}_{A}M^{\vee}$. Since Gorensteiness implies that  $\mathrm{pd}_{A}M^{\vee}\leqslant\mathrm{vdim}(A)$ \cite[Lemma 2.3.2]{Chen-Survey}, we see that for $l$ chosen as above the proof of \ref{proof_main} carries on and produces the desired isomorphisms.

\section*{Acknowledgements}
I am grateful to an anonymous referee for providing various comments that improved the quality of the paper. I should also like to thank Chrysostomos Psaroudakis for providing comments on a preliminary version.


\begin{bibdiv}
\begin{biblist}[\normalsize]

\bib{avf}{article}{
  AUTHOR =	 {Avramov, L.}
   Author = {Foxby, H.~B.},
  TITLE =	 {Homological dimensions of unbounded complexes},
  JOURNAL =	 {J. Pure Appl. Algebra},
  FJOURNAL =	 {Journal of Pure and Applied Algebra},
  VOLUME =	 {71},
  YEAR =	 {1991},
  NUMBER =	 {2-3},
  PAGES =	 {129--155},
  ISSN =	 {0022-4049},
  CODEN =	 {JPAAA2},
  MRCLASS =	 {18G20 (13D05 18E25 18G15 55U25)},
  MRNUMBER =	 {MR1117631},
  MRREVIEWER =	 {M. H. Bijan-Zadeh},
}

\bib{zbMATH06267567}{article}{
    Author = {Bergh, P. A.},
        Author = {Jorgensen, D. A.},
    Title = {Tate-Hochschild homology and cohomology of Frobenius algebras},
    FJournal = {Journal of Noncommutative Geometry},
    Journal = {J. Noncommut. Geom.},
    ISSN = {1661-6952; 1661-6960/e},
    Volume = {7},
    Number = {4},
    Pages = {907--937},
    Year = {2013},
    Publisher = {European Mathematical Society (EMS) Publishing House, Zurich},
}

\bib{Broue}{incollection}{,
    Author = {{Brou\'e}, M.},
    Title = {{Equivalences of blocks of group algebras}},
    BookTitle = {{Finite dimensional algebras and related topics. Proceedings of the NATO Advanced Research Workshop on Representations of algebras and related topics. Ottawa, Canada, August 10-18, 1992}},
    ISBN = {0-7923-2755-1/hbk},
    Pages = {1--26},
    Year = {1994},
    Publisher = {Dordrecht: Kluwer Academic Publishers},
    MSC2010 = {20C10 20C05 16D90 20C20 16S34},
    Zbl = {0827.20007}
}

\bib{Buch}{book}{
  author =	 {Buchweitz, R.-O.},
  title =	 {Maximal {C}ohen--{M}acaulay modules and {T}ate-cohomology over {G}orenstein rings},
  publisher =	 {University of Hannover},
  PAGES =	 {155},
  year =	 {1986},
  NOTE =	 {Available at \mbox{\texttt{http://hdl.handle.net/1807/16682}}},
}

\bib{Chen2}{article}{
 Author = {Chen, X.-W.},
 Title = {Singularity categories, Schur functors and triangular matrix rings},
 FJournal = {{Algebras and Representation Theory}},
 Journal = {{Algebr. Represent. Theory}},
 ISSN = {1386-923X; 1572-9079/e},
 Volume = {12},
 Number = {2-5},
 Pages = {181--191},
 Year = {2009},
 Publisher = {Springer Netherlands, Dordrecht},
 MSC2010 = {18E30 18E35 16E65},
}

\bib{chenrad}{article}{
 Author = {Chen, X.-W.},
 Title = {{The singularity category of an algebra with radical square zero}},
 FJournal = {{Documenta Mathematica}},
 Journal = {{Doc. Math.}},
 ISSN = {1431-0635; 1431-0643/e},
 Volume = {16},
 Pages = {921--936},
 Year = {2011},
 Publisher = {Deutsche Mathematiker-Vereinigung, Berlin},
}

\bib{Chen}{article}{
    Author = {Chen, X.-W.},
    Title = {Singular equivalences induced by homological epimorphisms},
    FJournal = {Proceedings of the American Mathematical Society},
    Journal = {Proc. Am. Math. Soc.},
    ISSN = {0002 - 9939 ; 1088 - 6826/e},
    Volume = {142},
    Number= {8},
    Year = {2014},
    pages = {2633--2640},
}

\bib{Chen-Survey}{misc}{
  AUTHOR = {Chen, X.-W.},
  TITLE =	 {Gorenstein Homological Algebra of Artin Algebras},
  HOWPUBLISHED = {preprint},
  YEAR= {2017},
  NOTE = {Available at \mbox{\texttt{https://arxiv.org/abs/1712.04587}}},
}

\bib{Chen-Sun}{misc}{
  AUTHOR = {Chen, X.-W.},
  author = {Sun, L.-G.},
  TITLE =	 {Singular equivalence of Morita type},
  HOWPUBLISHED = {preprint},
  YEAR= {2012},
  NOTE = {},
}

\bib{Cri}{article}{
 Author = {Christensen, L. W.},
 Title = {Gorenstein dimensions},
 FJournal = {{Lecture Notes in Mathematics}},
 Journal = {{Lect. Notes Math.}},
 ISSN = {0075-8434; 1617-9692/e},
 Volume = {1747},
 ISBN = {3-540-41132-1/pbk},
 Pages = {viii + 204},
 Year = {2000},
 Publisher = {Berlin: Springer},
 MSC2010 = {13D05 13-02 13D02 18G10 13D07 13D25 13E05 13H10 18G25},
}

\bib{dcm}{article}{
 Author = {Christensen, L. W.},
  author = {Foxby, H.-B.},
    author = {Holm, H.},
  TITLE =	 {Derived Category Methods in Commutative Algebra},
  HOWPUBLISHED = {Book in preparation},
  YEAR= {},
  NOTE = {Available at \mbox{\texttt{http://www.math.ttu.edu/~lchriste/download/dcmca.pdf}}},
}

\bib{Dugas-ViIla}{article}{
 Author = {Dugas, A.S.}
  Author = {Mart\'{\i}nez-Villa, R.},
 Title = {A note on stable equivalences of Morita type},
 FJournal = {{Journal of Pure and Applied Algebra}},
 Journal = {{J. Pure Appl. Algebra}},
 ISSN = {0022-4049},
 Volume = {208},
 Number = {2},
 Pages = {421--433},
 Year = {2007},
 Publisher = {Elsevier (North-Holland), Amsterdam},
 MSC2010 = {16D90 16G10 16D20 16D50},
}

\bib{Green}{article}{
    AUTHOR = {Green, E.},
     Author = {Psaroudakis, C.},
     TITLE = {On {A}rtin algebras arising from {M}orita contexts},
   JOURNAL = {Algebr. Represent. Theory},
  FJOURNAL = {Algebras and Representation Theory},
    VOLUME = {17},
      YEAR = {2014},
    NUMBER = {5},
     PAGES = {1485--1525},
      ISSN = {1386-923X},
   MRCLASS = {16S50 (16D50 16D90 16E65)},
  MRNUMBER = {3260907},
MRREVIEWER = {Septimiu Crivei},
}

\bib{Happel}{book}{
    AUTHOR = {Happel, D.},
     TITLE = {Triangulated categories in the representation theory of
              finite-dimensional algebras},
    SERIES = {London Mathematical Society Lecture Note Series},
    VOLUME = {119},
 PUBLISHER = {Cambridge University Press, Cambridge},
      YEAR = {1988},
     PAGES = {x+208},
      ISBN = {0-521-33922-7},
   MRCLASS = {16A46 (16A48 16A62 16A64 18E30)},
  MRNUMBER = {MR935124},
MRREVIEWER = {Alfred G. Wiedemann},
       DOI = {10.1017/CBO9780511629228},
       URL = {http://dx.doi.org/10.1017/CBO9780511629228},
}

\bib{Ha}{article}{
 Author = {Happel, D.},
 Title = {Hochschild cohomology of finite-dimensional algebras},
 Year = {},
 journal = {{S\'eminaire d'alg\`ebre P. Dubreil et M.-P. Malliavin, Proc., Paris/Fr. 1987/88, Lect. Notes Math. 1404, 108-126 (1989).}},
 MSC2010 = {16E40 16P10 16Gxx},
}

\bib{Psa2}{article}{
 Author = {K\"ulshammer, J.}
  Author = {Psaroudakis, C.} 
  Author = {Skarts{\ae}terhagen, \O. },
 Title = {Derived invariance of support varieties},
 FJournal = {{Proceedings of the American Mathematical Society}},
 Journal = {{Proc. Am. Math. Soc.}},
 ISSN = {0002-9939; 1088-6826/e},
 Volume = {147},
 Number = {1},
 Pages = {1--14},
 Year = {2019},
 Publisher = {American Mathematical Society (AMS), Providence, RI},
 MSC2010 = {16E40 16E65 18E30 16G10}
}

\bib{Liu1}{article}{
 Author = {Liu, Y.},
 Author = {Xi, C.},
 Title = {Constructions of stable equivalences of Morita type for finite dimensional algebras II},
 FJournal = {{Mathematische Zeitschrift}},
 Journal = {{Math. Z.}},
 ISSN = {0025-5874; 1432-1823/e},
 Volume = {251},
 Number = {1},
 Pages = {21--39},
 Year = {2005},
 Publisher = {Springer, Berlin/Heidelberg},
 MSC2010 = {16G10 16E40 16G70 16G60 18G05 16D90},
}

\bib{Liu2}{article}{
 Author = {Liu, Y.},
 Author = {Xi, C.},
 Title = {Constructions of stable equivalences of Morita type for finite-dimensional algebras III},
 FJournal = {{Journal of the London Mathematical Society. Second Series}},
 Journal = {{J. Lond. Math. Soc., II. Ser.}},
 ISSN = {0024-6107; 1469-7750/e},
 Volume = {76},
 Number = {3},
 Pages = {567--585},
 Year = {2007},
 Publisher = {John Wiley \& Sons, Chichester; London Mathematical Society, London},
 MSC2010 = {16G10 16D90 16E30 18G05},
}

\bib{Psa-2018}{article}{
  AUTHOR =	 {Oppermann, S.}
  author = {Psaroudakis, C.}
  author = {Stai, T.},
  TITLE =	 {Change of rings and singularity categories},
   FJournal = {Advances in Mathematics},
    Journal = {Adv. Math.},
    ISSN = {0001 - 8708},
    Volume = {350},
    Pages = {190--241},
    Year = {2019}
    }
    
\bib{Pogo}{article}{
 Author = {Pogorza{\l}y, Z.},
 Title = {Left-right projective bimodules and stable equivalences of Morita type},
 FJournal = {{Colloquium Mathematicum}},
 Journal = {{Colloq. Math.}},
 ISSN = {0010-1354; 1730-6302/e},
 Volume = {88},
 Number = {2},
 Pages = {243--255},
 Year = {2001},
 Publisher = {Polish Academy of Sciences (Polska Akademia Nauk - PAN), Institute of Mathematics (Instytut Matematyczny), Warsaw},
 MSC2010 = {16D90 16D20 16G20 16G70 16G60},
}
    
\bib{Psa3}{article}{
 Author = {Psaroudakis, C.} 
Author = {Skarts{\ae}terhagen, \O.}
Author ={Solberg, \O.}
 Title = {Gorenstein categories, singular equivalences and finite generation of cohomology rings in recollements},
 FJournal = {{Transactions of the American Mathematical Society. Series B}},
 Journal = {{Trans. Am. Math. Soc., Ser. B}},
 ISSN = {2330-0000/e},
 Volume = {1},
 Pages = {45--95},
 Year = {2014},
 Publisher = {American Mathematical Society (AMS), Providence, RI},
 MSC2010 = {18E10 18E30 16E30 16E40 16E65 16E10 16G50},
}

\bib{Reiner}{book}{
    AUTHOR = {Reiner, I.},
     TITLE = {Maximal orders},
      NOTE = {London Mathematical Society Monographs, No. 5},
 PUBLISHER = {Academic Press, London-New York},
      YEAR = {1975},
     PAGES = {xii+395},
   MRCLASS = {16A18 (12A80)},
  MRNUMBER = {0393100},
MRREVIEWER = {H. Jacobinski},
}

\bib{JRc89}{article}{
  AUTHOR =	 {Rickard, J.},
  TITLE =	 {Derived categories and stable equivalence},
  JOURNAL =	 {J. Pure Appl. Algebra},
  FJOURNAL =	 {Journal of Pure and Applied Algebra},
  VOLUME =	 {61},
  YEAR =	 {1989},
  NUMBER =	 {3},
  PAGES =	 {303--317},
  ISSN =	 {0022-4049},
  CODEN =	 {JPAAA2},
  MRCLASS =	 {16D90 (16G30 16S90 18E30 20C05)},
  MRNUMBER =	 {91a:16004},
  MRREVIEWER =	 {Dieter Happel},
}

\bib{Rick}{incollection}{
    Author = {Rickard, J.},
    Title = {Some recent advances in modular representation theory},
    BookTitle = {{Algebras and modules I. Papers from the workshop on representations of algebras and related topics preceding the 8th international conference on representations of algebras (ICRA VIII) , Trondheim, Norway, July 29--August 3, 1996}},
    ISBN = {0-8218-0850-8/pbk},
    Pages = {157--178},
    Year = {1998},
    Publisher = {Providence, RI: American Mathematical Society},
    MSC2010 = {20C20 18E30 20-02 20C05 20J06 20C15 19A31 16D90 16S34},
    Zbl = {0914.20010}
}

 \bib{Ska}{article}{
 Author = {Skarts{\ae}terhagen, \O.},
 Title = {Singular equivalence and the (Fg) condition},
 FJournal = {{Journal of Algebra}},
 Journal = {{J. Algebra}},
 ISSN = {0021-8693},
 Volume = {452},
 Pages = {66--93},
 Year = {2016},
 Publisher = {Elsevier (Academic Press), San Diego, CA},
 MSC2010 = {16E65 16E40 18E30 16G10},
}

\bib{Wang}{article}{
    Author = {Wang, Z.},
    Title = {Singular equivalence of Morita type with level},
    FJournal = {{Journal of Algebra}},
    Journal = {{J. Algebra}},
    ISSN = {0021-8693},
    Volume = {439},
    Pages = {245--269},
    Year = {2015},
    Publisher = {Elsevier (Academic Press), San Diego, CA},
    MSC2010 = {16E35 18E30 16D90},
    Zbl = {1343.16011}
}

\bib{Wang2}{article}{
Author = {Wang, Z.},
title={Invariance of the Gerstenhaber algebra structure on Tate-Hochschild cohomology},
Journal = {J. Inst. Math. Jussieu}
Fjournal={Journal of the Institute of Mathematics of Jussieu}, 
publisher={Cambridge University Press}, 
note={Published online by Cambridge University Press: 15 July 2019.~https://doi.org/10.1017/S1474748019000367},
 pages={1–36},
 Year = {2019}
}

\bib{Zaks}{article}{
    AUTHOR = {Zaks, A.},
     TITLE = {Injective dimension of semi-primary rings},
   JOURNAL = {J. Algebra},
  FJOURNAL = {Journal of Algebra},
    VOLUME = {13},
      YEAR = {1969},
     PAGES = {73--86},
      ISSN = {0021-8693},
   MRCLASS = {16.90},
  MRNUMBER = {MR0244325},
MRREVIEWER = {A. Rosenberg},
}

\bib{Zhou-Zim}{article}{
 Author = {Zhou, G.}
 Author = {Zimmermann, A.},
 Title = {On singular equivalences of Morita type},
 FJournal = {{Journal of Algebra}},
 Journal = {{J. Algebra}},
 ISSN = {0021-8693},
 Volume = {385},
 Pages = {64--79},
 Year = {2013},
 Publisher = {Elsevier (Academic Press), San Diego, CA},
 MSC2010 = {18E30 18E35 16D90 16E40 18G60},
}

\bib{Zim}{book}{
 AUTHOR = {Zimmermann, A.},
     TITLE = {Representation theory. A homological algebra point of view},
    SERIES = {Algebra and Applications},
    VOLUME = {19},
 PUBLISHER = {Springer, Cham},
      YEAR = {2014},
     PAGES = {xx+707},
      ISBN = {978-3-319-07967-7; 978-3-319-07968-4},
   MRCLASS = {16-02 (16E35 16G10 18-02 20-02)},
  MRNUMBER = {3289041},
MRREVIEWER = {Alex S. Dugas},
       DOI = {10.1007/978-3-319-07968-4},
       URL = {https://doi.org/10.1007/978-3-319-07968-4},
}
\end{biblist}
\end{bibdiv}
\end{document}